\theoremstyle{plain}
\newtheorem{theorem}{Theorem}
\newtheorem{proposition}[theorem]{Proposition}
\newcommand{\ba}{\begin{array}}
\newcommand{\ea}{\end{array}}
\newcommand{\beq}{\begin{equation}}
\newcommand{\eeq}{\end{equation}}
\newcommand{\bea}{\begin{eqnarray}}
\newcommand{\eea}{\end{eqnarray}}
\newcommand{\bean}{\begin{eqnarray*}}
\newcommand{\eean}{\end{eqnarray*}}
\newcommand{\bal}{\begin{align}}
\newcommand{\eal}{\end{align}}
\newcommand{\bit}{\begin{itemize}}
\newcommand{\eit}{\end{itemize}}
\newcommand{\benum}{\begin{enumerate}}
\newcommand{\eenum}{\end{enumerate}}
\newcommand{\bdm}{\begin{displaymath}}
\newcommand{\edm}{\end{displaymath}}
\newcommand{\bssz}{\begin{scriptsize}}
\newcommand{\essz}{\end{scriptsize}}
\newcommand{\bfnsz}{\begin{footnotesize}}
\newcommand{\efnsz}{\end{footnotesize}}
\newcommand{\blue}[1]{\textcolor{black}{#1}} 
\newcommand{\half}{\frac{1}{2}}
\newcommand{\tUjpk}{\vect U_{j+\half,k}}
\newcommand{\tUjkp}{\vect U_{j,k+\half}}
\newcommand{\tqjmk}{q_{j-\half,k}}
\newcommand{\tqjpk}{q_{j+\half,k}}
\newcommand{\tqjkm}{q_{j,k-\half}}
\newcommand{\tqjkp}{q_{j,k+\half}}
\newcommand{\detj}{\vert J\vert}
\newcommand{\alfas}{\alpha\left(s\right)}
\newcommand{\alfaps}{\alpha'\left(s\right)}
\newcommand{\alfapps}{\alpha''\left(s\right)}
\newcommand{\cosa}{\cos\left(\alfas\right)}
\newcommand{\sina}{\sin\left(\alfas\right)}
\newcommand{\cost}{\cos\left(\theta\right)}
\newcommand{\sint}{\sin\left(\theta\right)}
\newcommand{\partt}{\partial_{t}}
\newcommand{\parts}{\partial_{s}}
\newcommand{\partr}{\partial_{r}}
\newcommand{\parth}{\partial_{\theta}}
\newcommand{\partx}{\partial_{x}}
\newcommand{\party}{\partial_{y}}
\newcommand{\partz}{\partial_{z}}
\newcommand{\partA}{\partial_{1}}
\newcommand{\partii}{\partial_{2}}
\newcommand{\partiii}{\partial_{3}}
\newcommand{\dermat}{\frac{D}{Dt}}
\newcommand{\lap}{\Delta}
\newcommand{\grad}{\nabla}
\newcommand{\vect}[1]{{\bf {#1} }}
\newcommand{\tdermat}{\tilde{\dermat}}
\newcommand{\trho}{\tilde{\rho}}
\newcommand{\tVs}{\tilde{V_s}}
\newcommand{\tVr}{\tilde{V_r}}
\newcommand{\tVt}{\tilde{V_\theta}}
\newcommand{\tir}{\tilde{r}}
\newcommand{\tiP}{\tilde{P}}
\newcommand{\tdetj}{\tilde{\detj}}
\newcommand{\tparts}{\tilde{\parts}}
\newcommand{\tpartr}{\tilde{\partr}}
\newcommand{\tparth}{\tilde{\parth}}
\newcommand{\tvector}{\tilde{\vect V}_c}
\newcommand{\tgradc}{\tilde{\grad}_c}
\newcommand{\phat}{\widehat{p}}
\newcommand{\pbar}{\overline{p}}
\newcommand{\detjr}{\frac{\detj}{r}}
\newcommand{\rdetj}{\frac{r}{\detj}}
\newcommand{\vs}{V_s}
\newcommand{\vr}{V_r}
\newcommand{\vth}{V_\theta}
\newcommand{\tdetjr}{\frac{\tdetj}{\tilde{r}}}
\newcommand{\gs}{\gamma_s}
\newcommand{\gt}{\gamma_\theta}
\title{A new two-dimensional blood flow model with arbitrary cross sections
\thanks{
C. Rosales-Alcantar was supported by Conacyt National Grant 701892. G. Hernandez-Duenas was supported in part by grants UNAM-DGAPA-PAPIIT IN113019 \& Conacyt A1-S-17634. 
}
}
\date{}
\author{Cesar Alberto Rosales-Alcantar \& Gerardo Hernandez-Duenas \\
Institute of Mathematics, National University of Mexico \\ 
 Blvd. Juriquilla 3001, Queretaro, Mexico.
 \thanks{Email: cesar@im.unam.mx}}
\begin{document}

\maketitle
\begin{abstract}
A new two-dimensional model for blood flows in arteries with arbitrary cross sections is derived. The model consists of a system of balance laws for conservation of mass and balance of momentum in the axial and angular directions. The equations are derived by applying asymptotic analysis to the incompressible Navier-Stokes equations in narrow, large vessels and integrating in the radial direction in each cross section. The main properties of the system are discussed and a positivity-preserving well-balanced central-upwind scheme is presented. The merits of the scheme will be tested in a variety of scenarios. In particular, numerical results of simulations using an idealized aorta model are shown. We analyze the time evolution of the blood flow under different initial conditions such as perturbations to steady states consisting of a bulging in the vessel's wall. We consider different situations given by distinct variations in the vessel's elasticity.
\end{abstract}
%
\pagestyle{plain}

\section{Introduction}
\label{intro}
The impact of cardiovascular diseases in our lives has motivated the development of different models for blood flows. In \cite{formaggia2000advances}, a review of recent contributions towards the modeling of vascular flows is provided. A review of contributions regarding the mathematical modeling of the cardiovascular system is presented in \cite{quarteroni_manzoni_vergara_2017}. In particular, the challenges in the mathematical modeling both for the arterial circulation and the hearth function are discussed. See also the notes in \cite{quarteroni2004mathematical} for more on mathematical modeling and numerical simulation of the cardiovascular system. In \cite{arthurs2021crimson}, an open-source software framework for cardiovascular integrated modeling and simulation (CRIMSON) is described, which a tool to perform three-dimensional and reduced-order computational haemodynamics studies for real world problems.  \\



Three dimensional (3D) models provide very detailed information of the fluid's evolution. In \cite{ku1985pulsatile}, fluid velocities were measured by laser Doppler velocimetry under conditions of pulsatile flow and such measurements were compared to those given by steady flow conditions. In \cite{taylor2009patient}, anatomic and physiologic models are obtained with the aid of 3D imaging techniques for patient-specific modeling. However, 3D simulations are computationally expensive and often not a practical tool for a timely evaluation before a surgical treatment.  This has motivated the development of 1D models which reasonably well describe the propagation of pressure waves in arteries~\cite{Formaggia2003}. A 1D hyperbolic model for compliant axi-symmetric idealized vessels is derived in \cite{vcanic2003mathematical} and the properties of the model are discussed. The analysis of the blood flow after an endovascular repair is studied in \cite{Canic2002}, in which case the PDE based model has discontinuous coefficients. Furthermore, effects of viscous dissipation, viscosity of the fluid and other two dimensional effects were incorporated in a `one-and-a-half dimensional' model in \cite{vcanic2006blood}, where it is not necessary to prescribe an axial velocity profile \textit{a priori}. In \cite{canic2017dimension}, a coupled model that describes the interaction between a shell and a mesh-like structure is derived. The structure consists of 3D mesh-like elastic objects and the model embodies a 2D shell model and a 1D network model. Well-balanced high-order numerical schemes for one-dimensional (1D) blood flow models are constructed in \cite{MULLER201353} using the Generalized Hydrostatic Reconstruction technique.  Arbitrary Accuracy Derivative (ADER) finite volume methods for hyperbolic balance laws with stiff terms are extended to solve one-dimensional blood flows for viscoelastic vessels in \cite{Montecinos2014}, and such technique is applied to analyze the treatment of viscoelastic effects at junctions in \cite{Lucas2016}. The effects of variations of the mechanical properties of arteries due to diseases such as stenosis or aneurysms have also being studied using 1D models \cite{Willemet2015,Vosse2011,Ku1997}. It has also been noted in \cite{huberts2012experimental} that 1D models are able to describe the fluid's evolution after arteriovenous fistula (AVF) surgeries in 6 out of 10 patients and selected the same AVF location as an experienced surgeon in 9 out of 10 patients. Although 1D models have shown to be a reasonably good approximation in many situations, there exist limitations due in part to simplifications such as axial symmetry. The cross section is assumed to be a circle, which impacts the results. \\

The contributions listed above consider the two extreme cases where the models are either 3D or 1D, and some interactions between them. In this work, we derive an intermediate two-dimensional (2D) model where any shape of the cross section can be considered while maintaining a still much lower computational cost compared to 3D simulations. The 2D models are a good balance that provides more realistic results compared to its 1D counterparts and it has the advantage of a low computational cost when compared to the 3D models. The derivation and implementation of a model in two dimensions is one of our main contributions. The model is derived using asymptotic analysis that follows certain physical considerations. The velocity and vessel's radius here depend on the cross section's angle and axial position, while the 1D counterpart considers a uniform radius that varies only in the axial direction. Our model can handle perturbations and variations in the wall's elasticity affecting any specific area of the vessel while 1D simulations can only consider perturbations affecting entire cross sections. This is relevant in simulations of diseases such as aneurysms and stenosis that involve vessels with walls that have damaged areas, not necessarily entire cross sections. Furthermore, we present the properties of the model, construct a well-balanced central-upwind scheme and include numerical tests that show its merits.  \\

Our work is organized as follows. Section \ref{sec:Derivation} provides the derivation of the system of partial differential equations that describes our model, leaving the details of the asymptotic expansion to \ref{appendix:Derivation}. The model is derived by applying asymptotic analysis to the Navier-Stokes equations written in cylindrical coordinates and integrating in the radial direction in each cross section. Section \ref{sec:Properties} describes the quasilinear properties of the model, where we show that the system is conditionally hyperbolic. A closed form of the transmural pressure term is presented. Section \ref{sec:NumScheme} focuses on the description of the positivity-preserving well balanced second order central-upwind scheme. Finally, in Section \ref{sec:NumTests} we test our model by considering a variety of numerical experiments.

\section{Derivation of the model}
\label{sec:Derivation}
 
\subsection{The geometry of the vessel}

The model presented here is derived by computing the radial average in each axial position and angle in the vessel's cross section. Although it can be easily generalized, let us assume for simplicity that the vessel is aligned in the $x-z$ plane and that it extends along a curve that passes through each cross section. The parametrization of such curve is assumed to be known and its location is then represented by its arclength's position $s$ and coordinates $(x_o(s),y_o = 0, z_o(s))$. Each cross section is identified with the arclength position $s$ of the curve intersecting it. Any position in each cross section is located with the angle $\theta$ formed between the displacement from the intersection and a reference vector. The variables and parameters are functions of the axial position $s$ and angle $\theta$. As a result, the model allows for arbitrary cross sections and variations in each angle $\theta$, resulting in a 2D model. \\

 The above parametrization is done such that $s=0 $ ($s=s_L$) corresponds to the left (right) end of the vessel. For each $s$, the cross-section denoted by $C(s)$ is contained in a plane passing through $(x_o(s),y_o = 0, z_o(s))$ and perpendicular to the unit tangent vector $\vect T(s) = (\cosa, 0, \sina)$. Here $\alfas$ is the angle of the curve with respect to the horizontal axis $x$. Furthermore, for a point $(x,y,z)$ in the cross-section $C(s)$, let $\theta$ be the angle between the normal vector $(-\sina,0,\cosa)$ and the displacement $(x,y,z)-(x_o(s),0, z_o(s))$. This gives the following change of variables:
\begin{align}\label{ncs}
x\left(r,s,\theta\right) & = \:r\sina\sint + x_o\left(s\right), \nonumber \\
y\left(r,s,\theta\right) & = \: r\cost, \\
z\left(r,s,\theta\right) & = \: r\cosa\sint + z_o\left(s\right), \nonumber
\end{align}
where $r$ is the norm of the displacement. The corresponding Jacobian is given by
\[
\detj = r\left(1-r\sint\alfaps\right).
\]
A sufficient condition for the change of variables to be valid is that the radius $r$ for any point in the cross section does not exceed the radius of curvature of the parametrization. That is,
\[
r \le \mathfrak{R}(s) = \frac{1}{\kappa(s)},
\]
where $\kappa(s) = |\alpha'(s)|$ is the vessel's curvature and $\mathfrak{R}$ is the radius of curvature. \\

\begin{figure}[h!]
\begin{center}
{\includegraphics[width=0.7\textwidth]{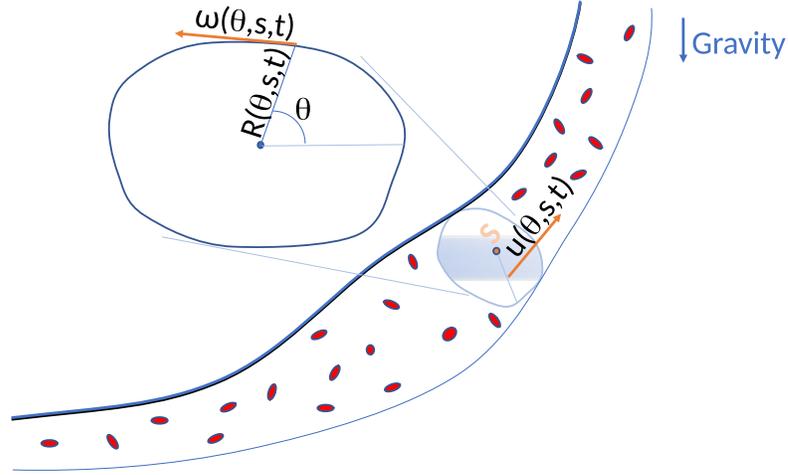}}
\end{center}
\caption{\label{fig:schematic} Schematic of model for blood flows passing through a compliant vessel.
}
\end{figure} 
Figure \ref{fig:schematic} shows the schematic of the model to be derived below. The vessel's radius may vary as a function of angle, axial position and time, $R=R(t\:;\:\theta,s)$. As a result, the vessel may have any cross-sectional shape. On the contrary, one-dimensional models assume a uniform radius, independent of $\theta$, restricting the cross section to be axi-symmetric.

\subsection{The averaged leading order equations}
\label{sec:AveragedEqns}

The derivation of the model requires rewriting the equations in cylindrical coordinates and carry out an asymptotic analysis to determine the leading order contribution under the following assumptions. Let $V_{s,o}$, $V_{r,o}$, and $V_{\theta,o}$ be the characteristic radial, axial and angular velocities. Let also $\lambda$ and $R_o$ be the characteristic axial and radial lengthscales. The small parameter in this expansion is the ratio between radial and axial lengthscales
\begin{equation}
\label{eq:Epsilon}
\epsilon : = \frac{R_o}{\lambda} = \frac{V_{r,o}}{V_{s,o}}.
\end{equation}
Typical values in the aorta between the renal and the iliac arteries gives $\epsilon \approx 10^{-2}$ \cite{vcanic2003mathematical}. Furthermore, we assume that the gravity $g$, the scales of pressure ($[P]$), radius ($R_o$), time ($T$), axial  and linear angular velocities ($V_{s,o},V_{\theta,o}$)  satisfy
\begin{equation}
\label{eq:Assumption1}
\frac{[P]}{\rho_o V_{s,o}^2} = O(1), \; \frac{V_{s,o}}{R_o V_{\theta,o}} = O(1), \; \frac{gT}{V_{s,o}}= O(1).
\end{equation}
Under these assumptions, the acceleration of gravity is comparable to the characteristic acceleration of the system in the axial direction. This is reasonable for typical velocities of order $O(1 \text{ ms}^{-1})$ and a timescale of $T=0.1 \text{ s}$. On the other hand, the change of variables is valid provided that $r |\alpha'(s)| < 1$. As a stronger assumption, we assume that $r \alpha'(s)$ is small, which implies that the artery's radius of curvature is large compared to its cross-sectional radius. On the other hand, an approximate value for blood viscosity in arteries can be taken as a constant $\nu = 4 \text{ cP} = 4 \times 10^{-2} \text{ g}\text{ (s cm)}^{-1}$ \cite{Ku1997}. Using $\rho_o = 1050 \text{ kg m}^{-3}$, and $R_o = 0.82 \text { cm}$, it gives us $\frac{\nu T}{\rho_o R_o^2} = 0.56 \times 10^{-2}$. Based on this estimation, we assume
\begin{equation}
\label{eq:Assumption2}
R_o \left| \alpha'(s) \right| = O\left( \epsilon \right),\:  \text{ and }\:\: \frac{\nu T}{\rho_o R_o^2} = O\left( \epsilon \right).
\end{equation}

After removing terms of order $O(\epsilon^2)$ in the non-dimensionalized equations, we obtain
\begin{equation}
\label{eq:ReducedEqns}
\begin{array}{lcl}
\detj\dermat\left(\rho\right) & = & 0, \\ \\
\detj\dermat\left(\rho\left[\detjr\right]^2\vs\right) & = & r\left[\detjr\right]^2\parts\left(\detjr\right)\rho\vs^2 - \detj\parts\left(p\right)-\detj \sina \rho g \\
& & + \nu \partr\left(\detj\partr\left(\left[\detjr\right]^2\vs\right)\right), \\ \\
\detj\dermat\left(\rho r^2\vth\right) & = & r\left[\detjr\right]^2\parth\left(\detjr\right)\rho\vs^2 -\detj \parth\left(p\right) \\
& & + \nu\partr\left(\detj\left[\partr\left(r^2\vth\right) - 2r\vth\right]  \right), \\ \\
\grad_c \cdot\left(\detj \vect V_c\right) & = & 0.
\end{array}
\end{equation}
The radial averaging is applied to the limiting equations. The details of the reduction of the system are left to \ref{appendix:Derivation} .

\subsection{The main system}

Let $R\left(s,\theta,t\right)$ denote the vessel's cross-sectional radius at each position $s$, angle $\theta$ and time $t$. Let us define
\begin{equation}
A\left(R,s,\theta,t\right) = \int_0^R \detj dr, \:\:\:\:\:\: u\left(s,\theta,t\right) = \frac{1}{A}\int_0^R \vs \detj dr,
\end{equation}
\begin{equation}
\omega\left(s,\theta,t\right) = \frac{1}{A}\int_o^R \vth \detj dr, \:\:\:\:\:\: \text{and } L\left(R,s,\theta,t\right) = \frac{1} A \int_o^R r^2 V_\theta \detj dr,
\end{equation}
where 
\[
A = A(R,s,\theta) = \frac{R^2}{2}-\frac{R^3}{3} \sin(\theta) \alpha'(s)
\] 
is the radially integrated Jacobian. On the other hand, $u, \omega$ and $L$ are the radially-averaged axial velocity, angular velocity and angular momentum, respectively. \\
\\
The model is derived after an integration in the radial direction, assuming a streamline
\[
\left[\vr\right]_{r=R} = \partt\left(R\right) + \left[\vs\right]_{r=R}\parts\left(R\right) + \left[\vth\right]_{r=R}\parth\left(R\right)
\]
at the artery's wall and a slowly varying density, which is approximated by a constant value. The main system is
\begin{equation}
\label{eq:MainSystemNotConservation}
\begin{array}{llll}
\partt\left(A\right) + \parts\left( A \: u \right) + \parth\left( A \: \omega \right) & = & 0, \\ \\
\partt\left(\psi_{s,o} A u \right) + \parts\left( \psi_{s,1} A u^2 \right) + \parth\left( \psi_{s,2} A u \omega \right) & = & -A \parts\left(\frac{p}{\rho}\right) - g A \sin(\alpha(s)) \\
&& + \int_o^R r \left[ \frac{|J|}{r} \right]^2  \partial_s \left( \frac{|J|}{r} \right)V_s^2 dr \\
&& + \frac{\nu}{\rho} \left(|J| \partial_r \left( \left[ \frac{|J|}{r}\right]^2 V_s \right)\right) \left|_o^R \right., \\ \\
\partt\left(A L \right) + \parts\left( \psi_{\theta,1} A L u \right) + \parth\left( \psi_{\theta,2} A L \omega \right) & = & -A\parth\left(\frac{p}{\rho}\right) \\
&& + \int_o^R r \left[ \frac{|J|}{r} \right]^2 \partial_\theta \left( \frac{|J|}{r} \right)V_s^2 dr \\
&& + \frac{\nu}{\rho} \left(|J| \left[ \partial_r ( r^2 V_\theta ) - 2 r V_\theta \right]\right)  \left|_o^R \right. , \\
\end{array}
\end{equation}
where $\psi_{s,o},\psi_{s,1},\psi_{s,2},\psi_{\theta,1}$ and $\psi_{\theta,2}$ are Coriolis terms satisfying
\begin{equation}
\begin{array}{lcllcllcl}
\psi_{s,o} A u & = & \int_o^R |J| \left[ \frac{|J|}{r} \right]^2 V_s dr , & &&  \\ \\
\psi_{s,1} A u^2 & = & \int_o^R |J| \left[ \frac{|J|}{r} \right]^2 V_s^2 dr , & \psi_{s,2} A u \omega & = & \int_o^R |J| \left[ \frac{|J|}{r} \right]^2 V_s V_\theta dr, \\ \\
\psi_{\theta,1} A L u & = & \int_o^R |J| r^2 V_\theta V_s dr, & \psi_{\theta,2} A L \omega & = & \int_o^R |J| r^2 V_\theta^2 dr.
\end{array}
\end{equation}

The angular velocity and angular momentum are related by
\[
L = A_\theta \omega,\:\:\:\: A_\theta = \frac{\int_o^R r^2 \vth \detj dr}{\int_o^R \vth \detj dr}.
\]

The integral of the Jacobian in the radial direction $A$ is one of our conserved variables. This variable $A$ satisfies that $\int_{s_0}^{s_1} \int_0^{2\pi} A d\theta ds$ is the volume in the corresponding artery's region. In the case of a circular cross section, the integral with respect to $\theta$ gives us the cross-sectional area. The balance of axial and angular momenta determine the other two conserved quantities, given by
\[
Q_1 = \psi_{s,o}  A u, Q_2 = A L,
\]
where $\psi_{s,o}$ takes into account the effect of curvature in the artery, and $\psi_{s,o}=1$ for horizontal vessels. One still needs to determine a profile for the axial and angular velocities $V_s$ and $V_\theta$ as functions of $r$ to close the system. For a given profile, the non-dimensional Coriolis terms $\psi_{s,o},\psi_{s,1},\psi_{s,2},\psi_{\theta,1}$ and $\psi_{\theta,2}$ are all explicit functions of $A, s$ and $\theta$. In fact, those parameters are explicit functions of $R\sin (\theta) \alpha'(s)$ for the profiles considered in this paper, and are therefore constant parameters for vessels with zero curvature. See section \ref{sec:SpecificProfiles} for more details. The transmural pressure, which is the pressure difference between the two sides of the artery's wall is denoted by $p$. The elasticity properties of the vessel are determined by a relationship between $p$, the area $A$ and possibly the variables $s$ and $\theta$ due to non-uniform properties in the artery (explicit dependance on parameters). That is, we assume that the transmural pressure is an explicit function 
\[
p = p(A,s,\theta).
\]
Furthermore, we assume that the transmural pressure vanishes at a given state at rest $A_o = A_o(s,\theta) = \frac{R_o^2}{2}-\frac{R_o^3}{3} \sin(\theta) \alpha'(s)$ with $R_o = R_o(s,\theta)$ such that
\[
p(A_o(s,\theta), s,\theta ) = 0.
\]
Some properties such as the hyperbolicity of the system are shown independently of the profiles, and we take equation \eqref{eq:MainSystemNotConservation} as the most general form of the system. This closes the system since everything is given in terms of the conserved quantities $A,Q_1$ and $Q_2$, and $s,\theta$ due to the presence of varying parameters. \\

\noindent
{\bf Conservation form}\\

The model can be written in conservation form. For that end, we need to introduce the following notation. The transmural pressure and other parameters such as the Coriolis terms are explicit functions $q=q(A,s,\theta)$ of $A,s$ and $\theta$. On the other hand, model parameters such as $R_o$ and $A_o$ depend explicitly on $(s,\theta)$. We will denote by $\partial_1, \partial_2, \partial_3$ the derivatives with respect to $A,s$ and $\theta$, keeping the other terms fixed. Thus
\[
\partial_s (p(A(s,\theta,t),s,\theta)) = \partial_1 p \: \partial_s A + \partial_2 p, \; \partial_\theta (p(A(s,\theta,t),s,\theta)) = \partial_1 p \: \partial_\theta A + \partial_3 p. 
\]
We distinguish them from $\partial_s$ and $\partial_\theta$, which take into account the variations of the conserved variables with respect to $s$ and $\theta$ over time.\\

In order to get the conservation form, we define the splitting of the transmural pressure as
\[
p = \phat + \pbar,
\]
where
\[
\phat\left(A,s,\theta\right) = \frac{1}{A}  \int _{A_o}^A \mathcal A \partial_1 \left(p\left(\mathcal A,s,\theta \right)\right) d \mathcal A,
\]
which satisfy
\[
\begin{array}{ccc}
A\parts\left(p\right) & = & \parts\left(A\phat\right) + A \partial_2 \pbar, \text{ and }\\ \\
A\parth\left(p\right) & = & \parth\left(A\phat\right) + A \partial_3 \pbar. 
\end{array}
\]

The 2D model for blood flows in arteries with arbitrary cross sections is written as a hyperbolic system of balance as 
\begin{equation}\label{eq:ConservationComplete}
\partt \vect U + \parts \: \vect F\left(\vect U\right) + \parth \: \vect G\left(\vect U\right)  = S\left( \vect U\right),
\end{equation}
where
\begin{equation}
\label{eq:ConservedQFlux}
\begin{array}{l}
\vect U = 
\left(
\begin{array}{c} 
A \\
\psi_{s,o} A u \\
A L 
\end{array}
\right),\:\:
\vect F \left(\vect U\right) = 
\left(
\begin{array}{c} 
A u \\
\psi_{s,1} A u^2 + \frac{1}{\rho} A\widehat{p} \\
\psi_{\theta,1} A u L
\end{array}
\right),\:\: \text{ and } \; \\
\vect G\left(\vect U\right) = 
\left(
\begin{array}{c} 
A \omega \\
\psi_{s,2} A u \omega \\
\psi_{\theta,2} A L \omega + \frac{1}{\rho}A \widehat{p} 
\end{array}
\right)
\end{array}
\end{equation}
are the vectors of conserved variables and the fluxes in the axial and angular directions, respectively. The vector of source terms is 
\begin{equation}
\label{eq:SourceGeneral}
\vect S\left(\vect U\right) = 
\left(
\begin{array}{c}
0 \\
\begin{array}{rl}
-\frac{A}{\rho} \partial_2 \pbar - A \sina g & +  \int_o^R \detjr \partial_s \left( \frac{|J|}{r} \right)V_s^2 \detj dr \\
&+ \frac{\nu}{\rho}\left[\detj\partr\left(\left[\detjr\right]^2 V_s\right)\right]_{r=R} 
\end{array}  \\
\begin{array}{rl}
-\frac{A}{\rho} \partial_3 \pbar + \int_o^R \detjr \parth & \left( \frac{|J|}{r} \right)V_s^2 \detj dr \\
& + \frac{\nu}{\rho}\left[\detj\partr\left(r^2 V_\theta\right) - 2r\detj V_\theta\right]_{r=R}
\end{array} \\
\end{array}
\right).
\end{equation}
We note that none of the source terms are non-conservative products. The source terms only involve derivatives of the model parameters with respect to the explicit dependance on $(s,\theta)$ and no derivates of the solution itself are present. This prevents both theoretical and numerical complications when shockwaves arise. Below we show explicit expressions of the source and Coriolis terms for a particular choice of profiles for the transmural pressure, axial and angular velocities. As we will see, the expressions are very simple in the case of horizontal arteries ($\alfaps=0$).

\section{Properties of the model}
\label{sec:Properties}

\subsection{Hiperbolicity of the model}


The conservation form in equation \eqref{eq:ConservationComplete} is crucial because it allows us to formulate the Rankine-Hugoniot conditions for weak solutions in the presence of shockwaves. We can apply the theory of weak solutions provided the model is hyperbolic. The hyperbolic properties of system \eqref{eq:ConservationComplete} can be studied through its quasilinear formulation, which is given by
\begin{equation}
\label{eq:QuasiLinearForm}
\partt \vect U + M_s\left(\vect U\right)\parts \vect U + M_\theta\left(\vect U\right) \parth\vect U = \tilde{\vect S}\left(\vect U\right),
\end{equation}
where the coefficient matrices are
\begin{equation}
\label{eq:CoeffMat1}
M_s\left(\vect U\right)=
\begin{pmatrix}
-\frac{A}{\psi_{s,o}}\partA\left(\psi_{s,o}\right)u & \frac{1}{\psi_{s,o}} & 0 \\
\frac{1}{\rho}\partA\left(A\phat\right) -\left(\frac{\psi_{s,1}}{\psi_{s,o}}\right)^2\partA\left(\frac{\psi_{s,o}^2 A}{\psi_{s,1}}\right)u^2 & 2\frac{\psi_{s,1}}{\psi_{s,o}}u  & 0 \\
-\frac{\psi_{\theta,1}^2}{\psi_{s,o}}\partA\left(\frac{\psi_{s,o} A}{\psi_{\theta,1}}\right)uL & \frac{\psi_{\theta,1}}{\psi_{s,o}}L & \psi_{\theta,1} u
\end{pmatrix},
\end{equation}
and
\begin{equation}
\label{eq:CoeffMat2}
M_\theta\left(\vect U\right)=
\begin{pmatrix}
-\frac{A}{A_\theta}\partA\left(A_\theta\right) \omega & 0 & \frac{1}{A_\theta} \\
-\frac{\psi_{s,2}^2}{\psi_{s,o} A_\theta}\partA\left(\frac{\psi_{s,o} A A_\theta}{\psi_{s,2}}\right)u\omega & \frac{\psi_{s,2}}{\psi_{s,o}}\omega & \frac{\psi_{s,2}}{A_\theta} u \\
\frac{1}{\rho}\partA\left(A\phat\right) - \psi_{\theta,2}^2\partA\left(\frac{A A_\theta}{\psi_{\theta,2}}\right)\omega^2 & 0 & 2\psi_{\theta,2}\omega
\end{pmatrix}.
\end{equation}

The vector of source terms of the quasi-linear formulation is
\begin{equation}
\tilde{S}\left(\vect U\right) = S\left(\vect U\right) +
\begin{pmatrix}
\frac{A}{\psi_{s,o}}\partii\left(\psi_{s,o}\right)u + \frac{A}{A_\theta}\partiii\left(A_\theta\right) \omega \\
-\frac{1}{\rho}\partii\left(A\phat\right) + \left(\frac{\psi_{s,1}}{\psi_{s,o}}\right)^2\partii\left(\frac{\psi_{s,o}^2 A}{\psi_{s,1}}\right)u^2 + \frac{\psi_{s,2}^2}{\psi_{s,o} A_\theta}\partiii\left(\frac{\psi_{s,o} A A_\theta}{\psi_{s,2}}\right)u\omega \\
-\frac{1}{\rho}\partiii\left(A\phat\right) + \frac{\psi_{\theta,1}^2}{\psi_{s,o}}\partii\left(\frac{\psi_{s,o}A}{\psi_{\theta,1}}\right)uL + \psi_{\theta,2}^2\partiii\left(\frac{A A_\theta}{\psi_{\theta,2}}\right)\omega^2 \\
\end{pmatrix}.
\end{equation}

The matrices $M_s$ and $M_\theta$ have two null entries in one column and  their eigenvalues have explicit expressions given by
\begin{equation}\label{eigen_s}
\lambda_o^s = \psi_{\theta,1} u, \; \; \lambda_{\pm}^s = \frac{2\psi_{s,1} - A\partA\left(\psi_{s,o}\right)}{2\psi_{s,o}}u \:\: \pm \:\: \sqrt{\frac{\frac{1}{\rho}\partA\left(A\phat\right)}{\psi_{s,o}} + \Upsilon_1u^2 },
\end{equation}
for $M_s$, and
\begin{equation}\label{eigen_theta}
\lambda_o^\theta = \frac{\psi_{s,2}}{\psi_{s,o}}\omega, \; \; 
\lambda_\pm^\theta = \frac{2\psi_{\theta,2}A_\theta-A\partA\left(A_\theta\right)}{2A_\theta}\omega \pm \sqrt{\frac{\frac{1}{\rho}\partA\left(A\phat\right)}{A_\theta} + \Upsilon_2 \omega^2} 
\end{equation}
for $M_\theta$, respectively. Here, 
\begin{equation}
\begin{array}{lcl}
\Upsilon_1 & = & \frac{1}{\psi_{s,o}^2} \left[\psi_{s,1} - \frac{1}{2}A\partA\left(\psi_{s,o}\right)\right]^2 +  \frac{1}{\psi_{s,o}}\left[A\partA\left(\psi_{s,1}\right) - \psi_{s,1}\right] , \text{ and }\\ \\
\Upsilon_2 & = & \frac{1}{A_\theta^2} \left[\psi_{\theta,2}A_\theta-\frac{1}{2}A\partA\left(A_\theta\right)\right]^2 + \frac{1}{A_\theta} \left[A\partA\left(\psi_{\theta,2}A_\theta\right) - \psi_{\theta,2}A_\theta\right]
\end{array}
\end{equation}
are all non-dimensional quantities.

Below we specify the profiles for the axial and angular velocities. For those specific profiles, we show that $\Upsilon_1= \Upsilon_1(\Gamma)$ and $\Upsilon_2 = \Upsilon_2(\Gamma)$ are explicit rational functions of $\Gamma$, where
\begin{equation}
\Gamma = R \sin(\theta) \alpha'(s).
\end{equation}
Such functions satisfy $\Upsilon_1(\Gamma=0) = \psi_{s,1} (\psi_{s,1}-1)$ and $\Upsilon_2(\Gamma=0) = \left( \psi_{\theta,2}-\frac{1}{2} \right)^2$. The special case $\Gamma=0$ corresponds to a horizontal vessel.   

\begin{proposition}
\label{prop:Hyp}
Let us assume that $p$ is strictly increasing with respect $A$, with positive partial derivative, except possibly at $A=0$ where the vessel collapses. The coefficient matrices $M_s$ and $M_\theta$ given by \eqref{eq:CoeffMat1} and \eqref{eq:CoeffMat2} have real eigenvalues and a complete set of eigenvectors, subject to the conditions
$0< R < \frac{1}{|\alpha'(s)|}$, and $\Upsilon_1, \Upsilon_2 \ge 0$.
\end{proposition}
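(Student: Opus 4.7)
The plan is to exploit the near block-triangular structure of both coefficient matrices $M_s$ and $M_\theta$, reducing the eigenvalue problem in each case to a $2\times 2$ sub-block, and then to verify that the discriminant of the remaining quadratic factor is non-negative under the stated hypotheses.

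First I would observe that in $M_s$ the entire third column vanishes except for the diagonal entry $\psi_{\theta,1}u$, so $e_3=(0,0,1)^T$ is immediately an eigenvector with eigenvalue $\lambda_o^s=\psi_{\theta,1}u$. Analogously, in $M_\theta$ the second column vanishes except for the entry $\frac{\psi_{s,2}}{\psi_{s,o}}\omega$, giving $\lambda_o^\theta=\frac{\psi_{s,2}}{\psi_{s,o}}\omega$ with eigenvector $e_2$. The characteristic polynomial therefore factors as the product of that linear factor and the characteristic polynomial of the leading $2\times 2$ block; expanding trace and determinant of each block and regrouping produces the explicit formulas \eqref{eigen_s} and \eqref{eigen_theta} for $\lambda_\pm^s$ and $\lambda_\pm^\theta$.

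Next I would show the radicands are non-negative. From the splitting $A\phat=\int_{A_o}^A \mathcal A\,\partA p(\mathcal A,s,\theta)\,d\mathcal A$ the fundamental theorem of calculus gives $\partA(A\phat)=A\,\partA p$. The bound $0<R<1/|\alfaps|$ forces $|J|/r=1-r\sin(\theta)\alfaps>0$ on $[0,R]$, which together with $R>0$ yields $A>0$, and the analogous positivity of $A_\theta$ as a weighted average of $r^2$ over the cross section; the reasonable velocity profiles of the next section also guarantee $\psi_{s,o}>0$. Combined with $\partA p\ge 0$ by the monotonicity hypothesis on $p$, the term $\frac{1}{\rho\psi_{s,o}}\partA(A\phat)$ is non-negative, and adding the assumed non-negative $\Upsilon_1 u^2$ leaves the whole radicand of $\lambda_\pm^s$ non-negative. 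The same argument, with $A_\theta$ replacing $\psi_{s,o}$ and $\Upsilon_2$ replacing $\Upsilon_1$, handles $\lambda_\pm^\theta$. Hence all three eigenvalues of each coefficient matrix are real. For completeness of the eigensystem, whenever $\lambda_o\neq\lambda_\pm$ the three eigenvectors constructed from $e_3$ (respectively $e_2$) and the two eigenvectors of the $2\times 2$ block, extended to $\mathbb R^3$ via the remaining row, are linearly independent; in the degenerate coincidence case one checks directly that the invariant subspace still supplies an independent eigenvector, so the geometric multiplicity matches the algebraic one.

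The main obstacle will be the algebraic verification that the discriminant of the $2\times 2$ characteristic polynomial simplifies exactly to $\frac{1}{\rho\psi_{s,o}}\partA(A\phat)+\Upsilon_1 u^2$, and similarly in the angular direction: the trace-squared and four-times-determinant expansions involve derivatives of all Coriolis coefficients, and they must be regrouped so that the $u^2$ contributions collapse into the dimensionless combination $\Upsilon_1$ while the pressure term separates cleanly as the coefficient of $1/\psi_{s,o}$. Once that identity is in hand, the conclusion is immediate from the non-negativity of the radicand established above.
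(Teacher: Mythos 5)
Your proposal is correct and follows essentially the same route as the paper's proof: exploit the null column of each coefficient matrix to peel off the eigenvalue $\lambda_o$ with a coordinate eigenvector, reduce to the $2\times 2$ block, use $\partial_1(A\hat p)=A\,\partial_1 p\ge 0$ together with the assumed non-negativity of $\Upsilon_1,\Upsilon_2$ to make the radicands non-negative, and then construct explicit eigenvectors, treating separately the case where $\lambda_o$ coincides with $\lambda_\pm$ (the paper's key observation there being $a_{12}=1/\psi_{s,o}>0$). The only difference is presentational: the paper takes the closed-form eigenvalue expressions \eqref{eigen_s}--\eqref{eigen_theta} as already established, whereas you correctly flag the trace/determinant regrouping into $\Upsilon_1,\Upsilon_2$ as the main computational step to verify.
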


The condition $0 < R < \frac{1}{|\alpha'(s)|}$ indicates that the artery's radius must not exceed the artery's radius of curvature and it is required for the change of variables to cylindrical coordinates to be valid. This implies that the non-dimensional parameter $\Gamma$ satisfies $|\Gamma| \le 1$.  For specific profiles used in this paper for the axial and angular velocities, $\Upsilon_1$ and $\Upsilon_2$ are in fact non-negative for $ \Gamma \in [-1,1]$.

\begin{proof}
Under the above hypothesis, and the fact that
\[
\partial_1 (A \hat p) = A\partial_1 p \ge 0,
\] 
the expressions inside the square roots in equations \eqref{eigen_s} and \eqref{eigen_theta} are non-negative. As a result, all the eigenvalues are real. The expressions inside the square roots could only vanish if $A=0$ (or equivalently $R=0$). It would also require that $u=0$ or $\omega = 0$ if $\Upsilon_1 = 0$ or $\Upsilon_2 = 0$ respectively. This could happen for certain parameter choices, specially in horizontal vessels. In any case, the condition $R > 0$ is sufficient to guarantee that $\lambda^s_+ \neq \lambda^s_-$ and $\lambda^\theta_+ \neq \lambda^\theta_-$. However, the eigenvalue $\lambda^s_o$ may still have multiplicity 2 if it coincides with $\lambda^s_+$ or $\lambda^s_-$. And the same applies for the matrix $M_\theta$. \\

In the case when the eigenvalues for $M_s$ are different and writing $M_s = (a_{ij})$, the eigenvectors form a basis and are given by
\[
\vect v_o^s = 
\left(
\begin{array}{c}
0 \\
0 \\
1
\end{array}
\right)
,\:\:\:\:\:
\vect v_\pm^s = 
\left(
\begin{array}{c}
a_{12} \\
\lambda_{\pm}^s - a_{11} \\
\frac{\left(a_{11}-\lambda_{\pm}^s\right)a_{32} - a_{12}a_{31}}{a_{33}-\lambda_{\pm}^s}
\end{array}
\right).
\]
If $\lambda_o^s = \lambda_-^s$, the eigenvectors are given by
\[
\vect v_o^s = 
\left(
\begin{array}{c}
0 \\
0 \\
1
\end{array}
\right)
,\:\:\:\:\:
\vect v_-^s = 
\left(
\begin{array}{c}
a_{12} \\
\lambda_-^s-a_{11} \\
0
\end{array}
\right)
,\:\:\:\:\:
\vect v_+^s = 
\left(
\begin{array}{c}
a_{12} \\
\lambda_+^s-a_{11} \\
\frac{\left(a_{11}-\lambda_+^s\right)a_{32} - a_{12}a_{31}}{a_{33}-\lambda_+^s}
\end{array}
\right).
\]
Since $\lambda^s_+ \neq \lambda^s_-$, one can easily check that the eigenvectors form a complete basis because $a_{12} = \frac{1}{\psi_{s,o}}>0$. \\
\\
The case $\lambda_o^s = \lambda_+^s$ and the analysis for $M_\theta$ are analogous.\\

\end{proof}

The hyperbolicity of the model requires that $n_s M_s+n_\theta R_o M_\theta$ has real eigenvalues and a complete set of eigenvectors for all $n_s,n_\theta \in \mathbb R$ such that $n_s^2+n_\theta^2 = 1$. Here $R_o$ is a constant in units of length that appears due to the fact that $s$ and $\theta$ have different units. For the general case, the matrix $n_s M_s+n_\theta R_o M_\theta$ has not a simple form. However, one can easily analyze the special case of a horizontal vessel ($\alpha'(s) = 0$) and $\omega=0$. In such case, $\psi_{s,o},\psi_{s,1},\psi_{s,2},\psi_{\theta,1}$ are all constant, and the characteristic polynomial is
\[
P(\lambda) = -\lambda^3 + c_2 \lambda^2 + c_1 \lambda +c_o,
\]
where
\[
\begin{array}{lcl}
c_2 & = & \left( 2 \psi_{s,1}+\psi_{\theta,1} \right) u n_s, \\
c_1 & = & \frac{1}{\rho}\partial_1 \left( A \hat p \right) \left( n_s^2 +\frac{R_o^2}{A_\theta} n_\theta^2 \right) 
+ \left( -\psi_{s,1} -2 \psi_{s,1}\psi_{\theta,1} \right) \; u^2 \; n_s^2 ,\\
c_o & = & \left[ -\frac{1}{\rho} \partial_1 \left( A \hat p \right) + \psi_{s,1} u^2 \right] \psi_{\theta,1} \; n_s^3 \; u \\
&& + \frac{1}{\rho} \partial_1 \left( A \hat p \right) \frac{R_o^2}{A_\theta} (-2\psi_{s,1}+\psi_{s,2}) \; n_\theta^2 \; u \; n_s.
\end{array}
\]


Using Cardano's approach, we know the characteristic polynomial has three distinct real eigenvalues if
\[
\begin{array}{lcl}
0 & < &  -27 c_o^2 -18 c_2 c_1 c_o +4 c_1^3 -4 c_2^3 c_o + c_2^2 c_1^2 \\
& = & 4 \left( \frac{1}{\rho} \partial_1 (A \hat p) +\psi_{s,1}(\psi_{s,1}-1) u^2 \right) \left[ \frac{1}{\rho} \partial_1 (A \hat p) + (2 \psi_{s,1} \psi_{\theta,1}-\psi_{s,1}-\psi_{\theta,1}^2) u^2 \right]^2 n_s^6\\
&& + 12 \frac{ \left( \frac{1}{\rho} \partial_1  (A \hat p)\right)^3 R_o^2}{A_\theta} n_s^4 n_\theta^2 \\
&& + 4 \frac{ \left( \frac{1}{\rho} \partial_1 (A \hat p) \right)^2 R_o^2 u^2}{A_\theta} \left(   20 \psi_{s,1}^2+\psi_{\theta,1} (9\psi_{s,2}+5\psi_{\theta,1})-\psi_{s,1} (6+9\psi_{s,2}+19 \psi_{\theta,1}) \right) n_s^4 n_\theta^2 \\
&& +4\frac{\frac{1}{\rho} \partial_1  (A \hat p) R_o^2 u^4}{A_\theta} \left[ 16 \psi_{s,1}^4-\psi_{s,2} \psi_{\theta,1}^3-4 \psi_{s,1}^3 (5+2 \psi_{s,2}+4\psi_{\theta,1}) \right. \\
&& \left.\; \; \; \; \; \; \; \;  \; \; \; \; \; \; \; \;  \; \; \; \; \; \; \; \; \; \; +\psi_{s,1}\psi_{\theta,1} \left[ 3\psi_{s,2} (-3+\psi_{\theta,1})+(-5+\psi_{\theta,1}) \psi_{\theta,1} \right] \right. \\
&& \; \; \; \; \; \; \; \;  \; \; \; \; \; \; \; \;  \; \; \; \; \; \; \; \; \; \ \left. +\psi_{s,1}^2 (3+\psi_{\theta,1} (19+2\psi_{\theta,1})+\psi_{s,2} (9+6\psi_{\theta,1})) \right. \Big ] n_s^4 n_\theta^2 \\
&& +12\frac{\left( \frac{1}{\rho} \partial_1  (A \hat p) \right)^3 R_o^4}{A_\theta^2} n_s^2 n_\theta^4  \\
&& +\frac{\left( \frac{1}{\rho} \partial_1  (A \hat p) \right)^2 R_o^4}{A_\theta^2} u^2 \left[ -32 \psi_{s,1}^2-27\psi_{s,2}^2-18\psi_{s,2}\psi_{\theta,1}+\psi_{\theta,1}^2 \right. \\
&& \; \; \; \; \;  \; \; \; \; \; \; \; \; \; \;  \; \; \; \; \; \; \; \; \; \; \; \;  \; \; \; \; \; \left. +4 \psi_{s,1} (-3+18 \psi_{s,2}+4 \psi_{\theta,1}) \right. \Big ] n_s^2 n_\theta^4 \\
&& + 4 \frac{ \left(  \frac{1}{\rho} \partial_1  (A \hat p) \right)^3 R_o^6 }{A_\theta^3} n_\theta^6.
\end{array}
\]

A sufficient condition for hyperbolicity is then
\[
\begin{array}{lcl}
\psi_{s,1} > 1, &  &  \\
20 \psi_{s,1}^2+\psi_{\theta,1} (9\psi_{s,2}+5\psi_{\theta,1})-\psi_{s,1} (6+9\psi_{s,2}+19 \psi_{\theta,1}) & > & 0,\\
16 \psi_{s,1}^4-\psi_{s,2} \psi_{\theta,1}^3-4 \psi_{s,1}^3 (5+2 \psi_{s,2}+4\psi_{\theta,1}) && \\
\; \; \; \; \; \; \; \; \; \; \; \; \; \; +\psi_{s,1}\psi_{\theta,1} \left[ 3\psi_{s,2} (-3+\psi_{\theta,1})+(-5+\psi_{\theta,1}) \psi_{\theta,1} \right] && \\
\; \; \; \; \; \; \; \; \; \; \; \; \; \; \; \; \; \; \; \; \;  \; \; \; \; \; \; \; +\psi_{s,1}^2 (3+\psi_{\theta,1} (19+2\psi_{\theta,1})+\psi_{s,2} (9+6\psi_{\theta,1})) & > & 0, \\
-32 \psi_{s,1}^2-27\psi_{s,2}^2-18\psi_{s,2}\psi_{\theta,1}+\psi_{\theta,1}^2
+4 \psi_{s,1} (-3+18 \psi_{s,2}+4 \psi_{\theta,1}) & > & 0.
\end{array}
\]

We have verified that such condition is met when we use the specific profiles and parameter values in Section \ref{sec:SpecificProfiles}. Although the hyperbolicity has been proved for the special case of horizontal vessels with vanishing angular velocity, we believe this property is satisfied in a much more general context because Proposition \ref{prop:Hyp} shows that each coefficient matrix can be diagonalized.

\subsection{Specific profiles of pressure, axial and radial velocities}
\label{sec:SpecificProfiles}

Following \cite{vcanic2003mathematical}, a Hagen-Poiseuille profile is assumed for the axial velocity:
\\
\begin{equation}
\vs = e_s \vs^\star u,\:\: \vs^\star = \frac{r}{\detj}\left[1-\left(\frac{r}{R}\right)^{\gamma_s}\right],
\end{equation}
where 
\[
e_s\left(A,s,\theta\right) = \frac{A}{\int_o^R \vs^\star\left(r,R\right)\detj dr}.
\]
This profile vanishes at the artery's wall and it is strongest at the center. The exponent $\gamma_s$ controls the transition from the center to the walls. Taking $\gamma_s = 9$, we get the effect of a Newtonian fluid \cite{smith2002anatomically}. On the other hand, we use a similar profile for the angular velocity,
\begin{equation}
\vth = e_\theta \vth^\star \omega,\:\: \vth^\star = \left[1-\frac{\gamma_\theta}{\gamma_\theta + 1}\frac{r}{R}\right]\left(\frac{r}{R}\right)^{\gamma_\theta - 1},\: \text{ where }\: e_\theta\left(A,s,\theta\right) = \frac{A}{\int_o^R \vth^\star\left(r,R\right)\detj dr}.
\end{equation}
In the numerical simulations we take $\gamma_\theta = 2$. It satisfies that the linear velocity $r V_\theta$ vanishes at the center and $\partial_r (rV_\theta) =0$ at $r=R$. These gives:
\begin{align*}
\psi_{s,o} & = \: 1 - \frac{4\left(\gs +2\right)}{3\left(\gs +3\right)}\Gamma + \frac{\gs +2}{2\left(\gs +4\right)}\Gamma^2, \\
\psi_{s,1} & = \: \frac{\gs +2}{\gs +1}\left[1 - \frac{2}{3}\Gamma\right]\left[1 - \frac{2\left(2\gs +2\right)\left(\gs+2\right)}{3\left(2\gs +3\right)\left(\gs +3\right)}\Gamma\right],
\\
\psi_{s,2} & = \: \frac{\gs +2}{\gs}\left[1 - \frac{\left(\gt + 2\right)\left(2\gt + \gs +2\right)}{2\left(\gt + \gs + 1\right)\left(\gt + \gs + 2\right)}\right]\frac{1-\frac{2}{3}\Gamma}{1-\frac{2\gt+3}{2\left(\gt+3\right)}\Gamma}\times \\
&\: \left[1-\frac{2\left(\gt+\gs+1\right)\left(3\gt^2 + 2\gt\gs + 11\gt + 3\gs + 9\right)}{\left(\gt+3\right)\left(\gt+\gs+3\right)\left(3\gt +2\gs + 4\right)}\Gamma \right. \\
& \: \left. + \frac{\left(\gt+2\right)\left(\gt+\gs+1\right)\left(\gt+\gs+2\right)\left(3\gt^2 + 2\gs\gt + 15\gt + 4\gs + 16\right)}{\left(\gt+3\right)\left(\gt+4\right)\left(\gt+\gs+3\right)\left(\gt+\gs+4\right)\left(3\gt + 2\gs + 4\right)} \Gamma^2\right],
\end{align*}
\begin{align*}
\psi_{\theta,1} & = \: \frac{\gs +2}{\gs}\left[1 - \frac{\left(\gt+3\right) \left(\gt + 4\right)\left(2\gt + \gs +4\right)}{2\left(\gt+2\right)\left(\gt + \gs + 3\right)\left(\gt + \gs + 4\right)}\right]\frac{1-\frac{2}{3}\Gamma}{1-\frac{\left(\gt+3\right)\left(2\gt+5\right)}{2\left(\gt+2\right)\left(\gt+5\right)}\Gamma},\\
\psi_{\theta,2} & = \: \frac{\left(\gt+3\right)\left(\gt+4\right)\left(5\gt+6\right)}{16\left(\gt+2\right)\left(2\gt+3\right)} \frac{\left[1-\frac{2}{3}\Gamma\right]\left[1-\frac{2\left(5\gt^2 + 14\gt + 10\right)}{\left(2\gt+5\right)\left(5\gt+6\right)}\Gamma \right]}{\left[1-\frac{2\gt +3}{2\left(\gt+3\right)}\Gamma \right]\left[1-\frac{\left(\gt+3\right)\left(2\gt+5\right)}{2\left(\gt+2\right)\left(\gt+5\right)}\Gamma\right]}, \text{ and }\\
A_\theta & = \: \frac{\left(\gt+2\right)^2}{\left(\gt+3\right)\left(\gt+4\right)}\left[\frac{1-\frac{\left(\gt+3\right)\left(2\gt+5\right)}{2\left(\gt+2\right)\left(\gt+5\right)}\Gamma}{1-\frac{2\gt +3}{2\left(\gt+3\right)}\Gamma}\right]R^2.
\end{align*}

The elasticity properties of the vessel can be described by the dependance of the transmural pressure on the radius $A$, and it must be an increasing function of $A$ in order to maintain hyperbolicity. As discussed in \cite{bessonov2016methods}, the elasticity properties of the vessel's wall may be impacted by the contraction of surrounding muscles or pathologies such as aneurysms, among others. Although deriving an explicit dependance $p = p(R,s)$ is complicated, valid expressions can be found in \cite{quarteroni2004mathematical,MULLER201353,bessonov2016methods}. Following \cite{vcanic2003mathematical}, the numerical tests use the following expression of the transmural pressure, 
\begin{equation}
\label{eq:PressureYoungModulus}
\begin{array}{c}
p\left(A,s,\theta\right) = G_o\left(s,\theta\right) \left( \left(\dfrac{A}{A_o\left(s,\theta\right)}\right)^{\beta/2} - 1\right),
\end{array}
\end{equation}
where $A_o$ as defined above. This includes the effect of the wall's thickness and the stress-strain response.  Here, $G_o$ is the elasticity coefficient. Shear stress is ignored, and it is assumed that the transmural pressure of the fluid is the only force exerted on the vessel's wall. The parameter $\beta>1$ corresponds to a non-linear stress-strain response. The value $\beta=2$ provides a good approximation for experimental data \cite{vcanic2003mathematical}. The dependence of $G_o$ and $R_o$ on $s$ and $\theta$ allows us to explore the change in elasticity properties of the vessel's wall, or to explore the influence of vessel tapering on shock formation \cite{Canic2002,Formaggia2003}. Here we adopt the parametrization of the elasticity parameter in terms of the Young's modulus and wall's thickness given by \cite{xiao2014systematic}
\begin{equation}\label{eq:GoDef}
G_o(s) = \frac{4}{3} E_Y \frac{h_d}{r_d}, E_Y = \frac{3}{2} \rho \frac{r_d}{h_d} c_d^2,
\end{equation}
where $E_Y$ is the Young's modulus, $r_d$ is the radius at diastolic pressure, $h_d$ is the wall's thickness, and $c_d$ is the pulse wave speed. \\

The explicit expressions of the transmural pressure decomposition that appears in model \eqref{eq:ConservationComplete} that correspond to the transmural pressure relation \eqref{eq:PressureYoungModulus} are
\begin{equation*}
\phat\left(A,s,\theta\right) = \frac{\beta}{\beta +2}p - \frac{\beta}{\beta + 2}G_o\frac{A_o - A}{A},\;\:\:\: \pbar\left(A,s,\theta\right) = \frac{2}{\beta +2}p + \frac{\beta}{\beta + 2}G_o\frac{A_o - A}{A}.
\end{equation*}

The transmural pressure terms in the source can be written in terms of the derivative of the parameters as
\begin{equation}
\label{eq:partial_p}
\partial_2 \pbar = \frac{\pbar}{G_o}\partial_2 G_o - \frac{\phat}{A_o}\partial_2 A_o,\:\:\:\: \partial_3 \pbar = \frac{\pbar}{G_o}\partial_3 G_o - \frac{\phat}{A_o}\partial_3 A_o.
\end{equation}

Furthermore, the needed expressions to compute the source terms are given by
\begin{align*}
\int_o^R r\parts\left(\detjr\right)\left[\detjr \vs\right]^2 dr & = \: -\frac{8\left(\gs+2\right)^2}{\left(\gs+3\right)\left(2\gs+3\right)}\left(\frac{A}{R}\right)^2 R\sint\alfapps u^2, \\
\int_o^R r\parth\left(\detjr\right)\left[\detjr \vs\right]^2 dr & = \: -\frac{8\left(\gs+2\right)^2}{\left(\gs+3\right)\left(2\gs+3\right)}\left(\frac{A}{R}\right)^2 R\cost\alfaps u^2, \\
\left[\detj\partr\left(\left[\detjr\right]^2 V_s\right)\right]_{r=R} & = \: -\left(\gs+2\right)\left[1-\Gamma\right]^2 \frac{2A}{R^2} u ,\\
\left[\detj\partr\left(r^2 V_\theta\right) - 2\detj rV_\theta\right]_{r=R} & = \: -\frac{\left(\gt+1\right)\left(\gt+3\right)\left(\gt+4\right)}{4\left(\gt+2\right)} \times \\
& \;\;\;\;\;\;\;\;\;\;\;\;\;\;\;\;\;\;\;\;\; \times \left[\frac{1-\Gamma}{1-\frac{\left(\gt+3\right)\left(2\gt+5\right)}{2\left(\gt+2\right)\left(\gt+5\right)}\Gamma}\right] \frac{2A}{R^2}L.
\end{align*}

\subsection{Steady-States}
Although transient flows provide a more complete description of pulsatile blood flows, it has been shown that under certain circumstances, steady states (i.e., those independent of time) provide enough information for clinical assessment. In \cite{geers2010comparison}, a 5\% difference was found in the time-averaged wall shear stress between transient and steady states. There are, however, other clinical situations where transient flows are necessary for an accurate description of the pulsatile blood flow. In any case, our numerical scheme is constructed to accurately compute transient flows, including those near steady states.

The 2D model \eqref{eq:ConservationComplete} admits a large class of steady states that arise when a delicate balance between flux gradients and source terms occurs. Here we characterize those steady states for vessels with zero curvature ($\alfaps = 0$, or $\alpha = \alpha_o$ constant), zero viscosity ($\nu = 0$) for fluids moving in the axial direction ($\omega = 0$). In those cases, equation \eqref{eq:MainSystemNotConservation} becomes

\[
\begin{array}{lcl}
\parts\left( A \: u \right) & = & 0, \\ \\
\parts\left( \psi_{s,1} A u^2 \right) & = & -A \parts\left(\frac{p}{\rho}\right)- g A \sin(\alpha_o), \\ \\
0 & = & -A \partial_\theta \left( \frac{p}{\rho} \right).
\end{array}
\]
which implies $Au = Q_1(\theta)$ is independent of $s$. The parameter $\psi_{s,1}$ is constant in vessels with zero curvature with the profiles in Section \ref{sec:SpecificProfiles}. The second equation for the balance of momentum can be re-written as
\[
A \partial_s \left( \psi_{s,1} \frac{u^2}{2} + \frac{p}{\rho} + g z_o(s) \right) = 0,
\]
where 
\[
z_o(s) = \sin(\alpha_o) \;  s
\]
is the artery's elevation above a reference height. As a result, smooth steady states for vessels with zero curvature, zero viscosity and vanishing angular velocity satisfy that the discharge $Q_1 = A u$ and the energy $E = \psi_{s,1} \frac{u^2}{2} + \frac{p}{\rho}+gz_o(s)$ are independent of $s$, whereas  the transmural pressure $p$ is independent of $\theta$. In particular, one could have constant discharge and energy. The steady states at rest correspond to the special case
\begin{equation}
\label{eq:SteadyStateRest}
u= 0, \omega = 0, R = R_o(s,\theta), \alpha(s) = 0.
\end{equation}
Below we construct a numerical scheme that respects those steady states at rest for arteries with arbitrary cross sections. \\

\section{Central-upwind Numerical Scheme}
\label{sec:NumScheme}

In this work, we use a central-upwind scheme, whose semi-discrete formulation is obtained after integrating equation \eqref{eq:ConservationComplete} over each cell $\mathcal{C}_{j,k}:=\left[s_{j-\half},s_{j+\half}\right]\times\left[\theta_{k-\half},\theta_{k+\half}\right]$, with center at $(s_j,\theta_k)$, $s_{j\pm \half}= s_j \pm  \Delta s/2$ and $\theta_{k \pm \half}=\theta_k \pm \Delta \theta/2$. The  cell averages $\overline{\vect U}_{j,k}\left(t\right)$
\[
\overline{\vect U}_{j,k}\left(t\right) = \frac{1}{\Delta s \Delta \theta} \int_{\mathcal{C}_{j,k}}\int \vect U\left(s,\theta,t\right)dsd\theta 
\]
are approximated by solving the semi-discrete formulation
\begin{equation}\label{eq:cusemi}
\frac{d}{dt}\overline{\vect U}_{j,k}\left(t\right) = -\frac{\vect H_{j+\frac{1}{2},k}^F \left(t\right) - \vect H_{j-\frac{1}{2},k}^F\left(t\right)}{\Delta s} -\frac{\vect H_{j,k+\frac{1}{2}}^G \left(t\right) - \vect H_{j,k-\frac{1}{2}}^G\left(t\right)}{\Delta \theta} + \overline{\vect S}_{j,k}\left(t\right),
\end{equation}
with numerical fluxes $\vect H^F$ and $\vect H^G$ given by \cite{kurganov2001semidiscrete},
\begin{equation} \label{eq:NumFlux}
\begin{array}{lll}
\vect H_{j+\frac{1}{2},k}^F \left(t\right) & = \: \frac{a_{j+\frac{1}{2},k}^+ F\left(\tUjpk^-\right) - a_{j+\frac{1}{2},k}^- F\left(\tUjpk^+\right)}{a_{j+\frac{1}{2},k}^+-a_{j+\frac{1}{2},k}^-} \\
& \;\;\;\;\;\;\;\;\;\;\;\;\;\;\;\; + \frac{a_{j+\frac{1}{2},k}^+ a_{j+\frac{1}{2},k}^-  }{a_{j+\frac{1}{2},k}^+-a_{j+\frac{1}{2},k}^-} \left[ \tUjpk^+ -  \tUjpk^-\right] \\ \\
\vect H_{j,k+\frac{1}{2}}^G \left(t\right) & = \: \frac{b_{j,k+\frac{1}{2}}^+ G\left(\tUjkp^-\right) - b_{j,k+\frac{1}{2}}^- G\left(\tUjkp^+\right) }{b_{j,k+\frac{1}{2}}^+-b_{j,k+\frac{1}{2}}^-} \\
& \;\;\;\;\;\;\;\;\;\;\;\;\;\;\; + \frac{b_{j,k+\frac{1}{2}}^+ b_{j,k+\frac{1}{2}}^- }{b_{j,k+\frac{1}{2}}^+-b_{j,k+\frac{1}{2}}^-}\left[ \tUjkp^+ - \tUjkp^-\right].
\end{array}
\end{equation}
For any quantity of interest $q=q(\vect U)$, the corresponding interface values are obtained via the following piece-wise linear reconstruction
\begin{equation}
\label{eq:Reconstruction}
\begin{array}{ccccccc}
\tqjpk^- & = & \overline{q}_{j,k} & + & \frac{\Delta s}{2}\left(q_s\right)_{j,k}, \\
\tqjmk^+ & = & \overline{q}_{j,k} & - & \frac{\Delta s}{2}\left(q_s\right)_{j,k}, \\
\tqjkp^- & = & \overline{q}_{j,k} & + & \frac{\Delta \theta}{2}\left(q_\theta\right)_{j,k}, \\
\tqjkm^+ & = & \overline{q}_{j,k} & - & \frac{\Delta \theta}{2}\left(q_\theta\right)_{j,k},
\end{array}
\end{equation}
where the slopes $\left(q_s\right)_{j,k}$ and $\left(q_\theta\right)_{j,k}$ are calculated using the generalized minmod limiter
\begin{align}
\label{eq:MinModLimiter}
\left(q_s\right)_{j,k} & = \:\: \text{minmod}\left(\phi\frac{\overline{q}_{j,k} - \overline{q}_{j-1,k}}{\Delta s},\frac{\overline{q}_{j+1,k} - \overline{q}_{j-1,k}}{2\Delta s},\phi\frac{\overline{q}_{j+1,k} - \overline{q}_{j,k}}{\Delta s}\right), \\
\left(q_\theta\right)_{j,k} & = \:\: \text{minmod}\left(\phi\frac{\overline{q}_{j,k} - \overline{q}_{j,k-1}}{\Delta \theta},\frac{\overline{q}_{j,k+1} - \overline{q}_{j,k-1}}{2\Delta \theta},\phi\frac{\overline{q}_{j,k+1} - \overline{q}_{j,k}}{\Delta \theta}\right) ,
\end{align}
where
\begin{equation*}
\text{minmod}\left(z_1,z_2,\cdots\right) = 
\left\{ 
\begin{array}{cc}
\text{min}_j\{z_j\} & \text{   if}\:\:\:\: z_j>0\:\: \forall j, \\
\text{max}_j\{z_j\} & \text{   if}\:\:\:\: z_j<0\:\: \forall j, \\
0 & \text{otherwise}.
\end{array}
\right.
\end{equation*}
Here, the parameter $\phi$ is used to control the amount of numerical viscosity present in the resulting scheme.\\

The discretization of the averaged source terms
\[
\overline{\vect S}_{j,k}\left(t\right) = \frac{1}{\Delta s \Delta \theta} \int_{\mathcal{C}_{j,k}}\int S\left(\vect U\right)\left(s,\theta,t\right)dsd\theta
\]
is carried out so as to satisfy the well-balanced property. This is explained in more detail in Section \ref{sec:WellBalancePositivity}.\\

The one-sided local speeds in the $s-$ and $\theta-$directions, $a_{j+\half,k}^\pm$ and $b_{j,k+\half}^\pm$, are obtained from the largest and the smallest eigenvalues of the Jacobians $\frac{\partial F\left(\vect U\right)}{\partial \vect U}$ and $\frac{\partial G\left(\vect U\right)}{\partial \vect U}$, respectively. Using \eqref{eigen_s} and \eqref{eigen_theta}, it follows that:
\begin{subequations}\label{eq:LocalSpeed}
\begin{align}
a_{j+\half,k}^+ = &\: \text{max}\Big\{\left(\lambda_o^s\right)_{j+\half,k}^-,\left(\lambda_+^s\right)_{j+\half,k}^-,u_{j+\half,k}^-,\left(\lambda_o^s\right)_{j+\half,k}^+,\left(\lambda_+^s\right)_{j+\half,k}^+,u_{j+\half,k}^+,0 \Big\}, \\
a_{j+\half,k}^- = &\: \text{min}\Big\{\left(\lambda_o^s\right)_{j+\half,k}^-,\left(\lambda_-^s\right)_{j+\half,k}^-,u_{j+\half,k}^-,\left(\lambda_o^s\right)_{j+\half,k}^+,\left(\lambda_-^s\right)_{j+\half,k}^+,u_{j+\half,k}^+,0 \Big\}, \\
b_{j,k+\half}^+ = &\: \text{max}\Big\{\left(\lambda_o^\theta\right)_{j,k+\half}^-,\left(\lambda_+^\theta\right)_{j,k+\half}^-,\omega_{j,k+\half}^-,\left(\lambda_o^\theta\right)_{j,k+\half}^+,\left(\lambda_+^\theta\right)_{j,k+\half}^+,\omega_{j,k+\half}^+,0 \Big\}, \\
b_{j,k+\half}^- = &\: \text{min}\Big\{\left(\lambda_o^\theta\right)_{j,k+\half}^-,\left(\lambda_-^\theta\right)_{j,k+\half}^-,\omega_{j,k+\half}^-,\left(\lambda_o^\theta\right)_{j,k+\half}^+,\left(\lambda_-^\theta\right)_{j,k+\half}^+,\omega_{j,k+\half}^+,0 \Big\}.
\end{align}
\end{subequations}
The time integration of the ODE system \eqref{eq:cusemi} is done using the second-order strong stability preserving Runge-Kutta scheme \cite{GottliebShuTadmor2001}
\begin{subequations} \label{eq:RKEvolution}
\begin{align}
\vect U^{(1)} =  & \overline{\vect U}(t) +\Delta t \, \vect C[\overline{\vect U}(t)], \label{eq:euler} \\[0.1in] 
\vect U^{(2)} = & \ \half \overline{\vect U} + \half \left(\vect U^{(1)} + \Delta t \, \vect C[\vect U^{(1)}] \right), \label{eq:RK2ndstage} \\[0.1in] 
\overline{\vect U}(t+\Delta t) := & \ \vect U^{(2)},
\end{align}
\end{subequations}
with
\begin{equation*}
\vect C[\vect U(t)]_{j,k} = -\frac{\vect H_{j+\frac{1}{2},k}^F \left(t\right) - \vect H_{j-\frac{1}{2},k}^F\left(t\right)}{\Delta s} -\frac{\vect H_{j,k+\frac{1}{2}}^G \left(t\right) - \vect H_{j,k-\frac{1}{2}}^G\left(t\right)}{\Delta \theta} + \overline{\vect S}_{j,k}\left(t\right).
\end{equation*}

The Courant-Friedrichs-Lewy (CFL) condition that determines the time step $\Delta t$ is
\beq\label{eq:cfl}
\Delta t \leq \frac{1}{4}\min\left\{ \frac{\Delta s}{a},\frac{\Delta \theta}{b}\right\},
\eeq
where 
\[
a= \max_{j,k}\left\{ \max \left(a_{j + \half,k}^+, - a_{j +  \half,k}^-\right)\right\}, \text{ and } b = \max_{j,k}\left\{ \max \left(b_{j,k +  \half}^+, - b_{j,k + \half}^-\right)\right\}. 
\]

\subsection{Steady states at rest and positivity of the cross-sectional radius}
\label{sec:WellBalancePositivity}

The quantities of interest to compute the numerical flux in equation \eqref{eq:NumFlux} are reconstructed at the interfaces via equation \eqref{eq:Reconstruction} using the minmod limiter given by equation \eqref{eq:MinModLimiter}. However, the reconstructed values need to be implemented carefully in order to guarantee the well-balance property. For that end, we assume that the radius at rest $R_o\left(s,\theta\right)$ is defined at the interfaces $\left(s_{j},\theta_{k\pm\half}\right)$ and at $\left(s_{j\pm\half},\theta_{k}\right)$, and we define it at the center of each cell as
\begin{equation}
\label{eq:RoReconst}
R_{o,j,k} : = \frac{1}{4} \left[ R_o\left(s_{j-\half},\theta_k\right) + R_o\left(s_{j+\half},\theta_k\right) + R_o\left(s_j,\theta_{k-\half}\right) + R_o\left(s_j,\theta_{k+\half}\right) \right].
\end{equation}
On the other hand, the angle $\alpha(s)$ is defined at each interface point $s_{j+\half}$. The derivatives both at the center of each cell and at the interfaces are approximated via centered differences as
\begin{equation}
\label{eq:AlphaReconst}
\begin{array}{ll}
\alpha'(s_j) \approx \frac{\alpha \left( s_{j+\half} \right)-\alpha \left( s_{j-\half} \right)}{\Delta s}, \; \;
\alpha(s_j) \approx \frac{\alpha \left( s_{j+\half} \right)+\alpha \left( s_{j-\half} \right)}{2}, \\ \\
\alpha'(s_{j+\half}) \approx \frac{\alpha \left( s_{j+1} \right)-\alpha \left( s_{j} \right)}{\Delta s},
\end{array}
\end{equation}
which gives
\begin{equation*}
\begin{array}{ll}
A_{o,j+\half,k} = \frac{R_{o,j+\half,k}^2}{2}-\frac{R_{o,j+\half,k}^3}{3} \sin (\theta_k) \alpha'\left( s_{j+\half} \right), \; \; \\
A_{o,j,k+\half} = \frac{R_{o,j,k+\half}^2}{2}-\frac{R_{o,j,k+\half}^3}{3} \sin (\theta_{k+\half}) \alpha'\left( s_{j} \right).
\end{array}
\end{equation*}

In order to reconstruct $A$ at the interfaces, we define $\mathcal A = A/A_o$ and reconstruct the values $\mathcal A_{j+\half,k}^\pm$ and  $\mathcal A_{j,k+\half}^\pm$ at the cell interfaces using equation \eqref{eq:Reconstruction}. The cross-sectional area at the cell interfaces are then given by
\begin{equation}
\label{RReconst}
A_{j+\half,k}^\pm = \mathcal A_{j+\half,k}^\pm A_{o,j+\half,k}, \;\;\; A_{j,k+\half}^\pm = \mathcal A_{j,k+\half}^\pm A_{o,j,k+\half}.
\end{equation}
This way, if $R=R_o$ or equivalently $A=A_o$ at the center of each cell (as it occurs for steady states at rest), the same equality holds at the cell interfaces. Once the variable $A$ is reconstructed, this gives the reconstruction for $R$ by inverting it in terms of $A$. One also obtains the reconstruction for the parameter $\Gamma$ via the relation
\begin{equation}
\label{eq:GammaReconst}
\Gamma_{j+\half,k}^\pm = R_{j+\half,k}^\pm \sin(\theta_k)^\pm \alpha'(s_{j+\half} ), \; \; \Gamma_{j,k+\half}^\pm = R_{j,k+\half}^\pm \sin(\theta_{k+\half}) \alpha'(s_{j}).
\end{equation}
This immediately defines all the parameter functions $\psi_{s,o},\psi_{s,1},\psi_{s,2},\psi_{\theta,1},\psi_{\theta,2}$ and $A_\theta$ at the interfaces. 
The conserved variables $Q_1 = \psi_{s,o} A u$ and $Q_2= A L$ are reconstructed directly via equation \eqref{eq:Reconstruction}, from which we can recover the reconstructed values for $u=\frac{Q_1}{\psi_{s,o} A}, L = \frac{Q_2}{A}$, and $\omega = \frac{L}{A_\theta}$.

The source terms in equation \eqref{eq:SourceGeneral} do not involve derivatives of the solution itself and one can use the cell averages to discretize them. The partial derivatives $\partial_2 \bar p$ and $\partial_3 \bar p$ are with respect to the explicit dependance of the fixed parameters involved in the definition of the transmural pressure. For instance, the transmural pressure $p$ given by equation \eqref{eq:PressureYoungModulus} involves the radius at rest $R_o(s,\theta)$ and the parameter $G_o(s,\theta)$. The parameter $G_o\left(s,\theta\right)$ is defined at the interfaces $\left(s_{j},\theta_{k\pm\half}\right)$ and $\left(s_{j\pm\half},\theta_{k}\right)$, and we define it at the center of each cell as
\begin{equation}
\label{eq:GoReconst}
G_{o,j,k} : = \frac{1}{4} \left[ G_o\left(s_{j-\half},\theta_k\right) + G_o\left(s_{j+\half},\theta_k\right) + G_o\left(s_j,\theta_{k-\half}\right) + G_o\left(s_j,\theta_{k+\half}\right) \right].
\end{equation}
The terms $\partial_2 \bar p$ and $\partial_3 \bar p$ are given explicitly by equation \eqref{eq:partial_p}. In that case, one only needs partial derivatives of $G_o$ and $A_o$ which are approximated as
\begin{equation}
\label{eq:partialG0}
\begin{array}{l}
\partial_2 G_o (s_j,\theta_k) \approx \frac{G_o(s_{j+\half,k})-G_o(s_{j-\half,k})}{\Delta s},  \; \; \; \\ \\
\partial_3 G_o (s_j,\theta_k) \approx \frac{G_o(s_{j,k+\half})-G_o(s_{j,k-\half})}{\Delta \theta},
\end{array}
\end{equation}
\begin{equation}
\label{eq:partialA0}
\begin{array}{l}
\partial_2 A_o (s_j,\theta_k) \approx \frac{A_o(s_{j+\half,k})-A_o(s_{j-\half,k})}{\Delta s},  \; \; \;  \\ \\
\partial_3 A_o (s_j,\theta_k) \approx \frac{A_o(s_{j,k+\half})-A_o(s_{j,k-\half})}{\Delta \theta}.
\end{array}
\end{equation}

In a steady state at rest given by equation \eqref{eq:SteadyStateRest}, the reconstructed values of $u$ and $\omega$ are zero, and the equality $R=R_o$ holds at the interfaces. As a result, all the numerical fluxes $\vect H_{j+\half,k}^F, \vect H_{j,k+\half}^G$ and the source terms $\overline{\vect S}_{j,k}$ vanish. We have proved the following proposition.\\

\begin{proposition}
Consider system \eqref{eq:ConservationComplete}, \eqref{eq:ConservedQFlux} and \eqref{eq:SourceGeneral} with transmural pressure given by \eqref{eq:PressureYoungModulus}. Then, the numerical scheme \eqref{eq:NumFlux}-\eqref{eq:cfl} with the discretizations given by \eqref{eq:RoReconst}-\eqref{eq:partialA0} is well-balanced, i.e., $\overline{\vect U}(t+\Delta t) = \overline{\vect U}(t)$ for steady states at rest.
\end{proposition}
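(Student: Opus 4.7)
The plan is to verify cell-by-cell that the forward-Euler residual $\vect C[\overline{\vect U}(t)]$ vanishes at the state \eqref{eq:SteadyStateRest}, since then both stages of the strong-stability-preserving Runge--Kutta scheme \eqref{eq:RKEvolution} trivially reproduce $\overline{\vect U}(t)$. It suffices, therefore, to show that at every interface both $\vect H^F_{j+\half,k}$ and $\vect H^G_{j,k+\half}$ vanish, and that the discrete source $\overline{\vect S}_{j,k}$ vanishes in every cell.

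First, I would trace the reconstruction carefully. Because $\alpha \equiv 0$, the centered-difference formula \eqref{eq:AlphaReconst} gives $\alpha'(s_{j+\half}) = 0$ at every interface, so by \eqref{eq:GammaReconst} each $\Gamma^\pm$ is zero. Because $\overline{A}_{j,k} = A_{o,j,k}$ in every cell at rest, the ratio $\overline{\mathcal A}_{j,k} := \overline{A}_{j,k}/A_{o,j,k} \equiv 1$ is constant across the grid, so the minmod limiter \eqref{eq:MinModLimiter} returns vanishing slopes and produces $\mathcal A^\pm = 1$, i.e.\ $A^\pm = A_{o,j+\half,k}$ (or $A_{o,j,k+\half}$) at every interface via \eqref{RReconst}. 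Similarly, the cell averages of $Q_1 = \psi_{s,o} A u$ and $Q_2 = AL$ vanish identically, so the reconstructed $u^\pm$, $L^\pm$, and $\omega^\pm$ are all zero at every interface.

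Second, I would evaluate the fluxes at these interface states. Every entry of $\vect F$ and $\vect G$ in \eqref{eq:ConservedQFlux} except the pressure contribution is proportional to $u$, $\omega$, or $L$ and therefore vanishes. The pressure entry contains $\tfrac{1}{\rho} A \phat$, which vanishes because the definition $\phat(A,s,\theta) = \tfrac{1}{A}\int_{A_o}^A \mathcal A\, \partial_1 p\, d\mathcal A$ together with the normalization $p(A_o,s,\theta) = 0$ gives $\phat(A_o,s,\theta) = 0$ and $A^\pm = A_o$ by step one. Since $\vect U^+ = \vect U^-$ at every interface, the upwind-dissipation bracket in \eqref{eq:NumFlux} also vanishes, so $\vect H^F_{j+\half,k} = \vect H^G_{j,k+\half} = 0$. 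Next, I would inspect the source $\vect S$ in \eqref{eq:SourceGeneral}: under the Hagen--Poiseuille profiles of Section \ref{sec:SpecificProfiles}, $u = \omega = 0$ forces $V_s \equiv V_\theta \equiv 0$, which kills every $V_s^2$ integral and wall-viscosity contribution; $\sina = 0$ kills the gravity term; and the remaining pressure contributions $-\tfrac{A}{\rho}\partial_2 \pbar$ and $-\tfrac{A}{\rho}\partial_3 \pbar$ vanish because \eqref{eq:partial_p} expresses them in terms of $\phat/A_o$ and $\pbar/G_o$, and at $A = A_o$ we have $\phat = 0$ from step two together with $\pbar = p - \phat = 0$. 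Combining, $\vect C[\overline{\vect U}(t)] \equiv 0$, which propagates through both Runge--Kutta stages to give $\overline{\vect U}(t+\Delta t) = \overline{\vect U}(t)$.

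The main subtlety, and the one step that requires care rather than bookkeeping, is the design choice of reconstructing the ratio $\mathcal A = A/A_o$ rather than $A$ itself. Only this device guarantees that the interface value $A^\pm$ equals the locally evaluated $A_o(s_{j\pm\half},\theta_k)$ (which varies nontrivially through the prescribed $R_o(s,\theta)$) at rest, so that $\phat$ and $\pbar$ vanish pointwise at every interface and in turn cancel the corresponding source contributions. Once that design is in place, the remainder of the argument is a mechanical check that each term in $\vect C$ contains at least one factor that is manifestly zero at the configuration \eqref{eq:SteadyStateRest}.
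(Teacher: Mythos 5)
Your proof is correct and follows essentially the same route as the paper, which argues in one short paragraph that at a steady state at rest the reconstructed $u$ and $\omega$ vanish and $R=R_o$ holds at the interfaces (precisely because $\mathcal A = A/A_o$ is the reconstructed quantity), so all numerical fluxes and discretized source terms vanish. You simply carry out in full the bookkeeping the paper leaves implicit — including the observations that $\phat(A_o)=\pbar(A_o)=0$ and $\vect U^+=\vect U^-$ at each interface — and you correctly identify the reconstruction of the ratio $\mathcal A$ as the key design choice.
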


The following proposition shows that the CFL condition \eqref{eq:cfl} guarantees the positivity of $A$ when the solution is computed with the Runge-Kutta method \eqref{eq:RKEvolution} and a slight modification is applied to the reconstruction at the interfaces. This is particularly important in situations where the cross section is small. This is not a relevant case from the medical point of view. However, we present it here for the sake of completeness and it would be useful for applications involving collapsed tubes. 

Since we have reconstructed $\mathcal A = A/A_o$ at the interfaces, 
\begin{equation}
\label{eq:AveragesRelation}
\overline{A}_{j,k} = \frac{1}{4}\left[A_{j-\half,k}^+ + A_{j+\half,k}^- + A_{j,k+\half}^- + A_{j,k-\half}^+\right]
\end{equation}
does not necessarily hold unless $A_o$ is constant. In the cases where one decides to implement the positivity-preserving property, a modification in the reconstruction must be implemented. Namely,
\begin{subequations}
\begin{align}
\label{eq:Correction}
\mathcal{A}_{j+\half,k}^- = &\: \overline{\mathcal{A}}_{j,k} + \frac{\Delta s}{2}\left(\mathcal{A}_s\right)_{j,k} \\
A_{j+\half,k}^- = &\: \min\left( \max\{  \mathcal{A}_{j+\half,k}^- A_{o,j+\half,k},A_{th} \}, 2  \bar A_{j,k} \right) \label{eq:CorrectionB} \\
A_{j-\half,k}^+ = &\: 2 \bar A_{j,k} - A_{j+\half,k}^- \label{eq:CorrectionC}
\end{align}
\end{subequations}
with the analogous procedure for $A_{j,k\pm \half}^\pm$. Here $A_{th}$ is a threshold value needed to maintain positivity in the reconstruction of the interface values. Equation \eqref{eq:CorrectionB} guarantees that $0 \le A_{j+\half,k}^- \le 2 \bar A_{j,k}$, and the discretization is consistent with the equations. Equation \eqref{eq:CorrectionC} guarantees $A_{j-\half,k}^+ \ge 0$. As a result, the above corrections ensure the positivity of the reconstructed values as well as the relation in equation \eqref{eq:AveragesRelation} needed in the proof below. It is also important to mention that the above corrections does not affect the well-balance property.

\begin{proposition}\label{prop:Positivity}
Consider the scheme with the reconstruction algorithm described in $\mathcal{x}$ \ref{sec:WellBalancePositivity}. If the cell averages $\overline{A}_{j,k}\left(t\right)$ are such that
\[
\overline{A}_{j,k}\left(t\right) \geq 0\:\:\: \forall\:\: j,k,
\]
then the cell averages $\overline{A}_{j,k}\left(t+\Delta t\right)$ computed with the Runge-Kutta method \eqref{eq:RKEvolution} for all $j,k$, under the CFL limitation \eqref{eq:cfl}, will yield
\[
\overline{A}_{j,k}\left(t+\Delta t\right) \geq 0\:\:\: \forall\:\: j,k,
\]
\end{proposition}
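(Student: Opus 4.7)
The proof plan rests on the standard observation that the two-stage SSP Runge--Kutta step \eqref{eq:RKEvolution} is a convex combination of forward Euler updates. Since the set $\{\bar A_{j,k}\ge 0 \text{ for all } j,k\}$ is convex, it suffices to show that one forward Euler update preserves non-negativity of the first (mass-balance) component. Because the first entry of the source vector \eqref{eq:SourceGeneral} is identically zero, the Euler step for $\bar A_{j,k}$ reduces to
\[
\bar A^{n+1}_{j,k} \;=\; \bar A_{j,k} \;-\; \frac{\Delta t}{\Delta s}\bigl(H^F_{j+\half,k,1} - H^F_{j-\half,k,1}\bigr) \;-\; \frac{\Delta t}{\Delta\theta}\bigl(H^G_{j,k+\half,1} - H^G_{j,k-\half,1}\bigr),
\]
where the subscript $1$ denotes the first component of each numerical flux in \eqref{eq:NumFlux}.

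The second step is to invoke the corrected reconstruction \eqref{eq:Correction}--\eqref{eq:CorrectionC}, which enforces both the positivity $A^\pm_{j\pm\half,k}, A^\pm_{j,k\pm\half}\ge 0$ and the cell-average identity \eqref{eq:AveragesRelation}. This last identity lets me split $\bar A_{j,k}$ equally among the four interface values $A^+_{j-\half,k}$, $A^-_{j+\half,k}$, $A^-_{j,k+\half}$, $A^+_{j,k-\half}$. Substituting that split into the Euler update and plugging in the explicit form of the central-upwind flux \eqref{eq:NumFlux}, I can rewrite $\bar A^{n+1}_{j,k}$ as a linear combination of the six non-negative interface values arising from $\bar A_{j,k}$ together with the neighboring-cell contributions $A^-_{j-\half,k}, A^+_{j+\half,k}, A^-_{j,k-\half}, A^+_{j,k+\half}$.

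The heart of the proof is then the verification that every coefficient in this combination is non-negative under the CFL limitation \eqref{eq:cfl}. The representative calculation, for the coefficient of $A^-_{j+\half,k}$, yields
\[
\frac{1}{4} \;-\; \frac{\Delta t}{\Delta s}\,\frac{a^+_{j+\half,k}\bigl(u^-_{j+\half,k} - a^-_{j+\half,k}\bigr)}{a^+_{j+\half,k} - a^-_{j+\half,k}}.
\]
The definition \eqref{eq:LocalSpeed} of the local speeds gives the ordering $a^-_{j+\half,k}\le u^-_{j+\half,k}\le a^+_{j+\half,k}$, so the fraction lies in $[0, a^+_{j+\half,k}]$ and is bounded by the global maximum speed $a$; the CFL bound \eqref{eq:cfl} then makes the coefficient non-negative. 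The coefficients of $A^+_{j-\half,k}, A^-_{j-\half,k}, A^+_{j+\half,k}$ and the four $\theta$-interface analogues are handled by symmetric one-line estimates using the same local-speed inequalities and the corresponding CFL bound on $b$. Since every coefficient is non-negative and every interface value is non-negative, $\bar A^{n+1}_{j,k}\ge 0$ follows, and by convexity the same conclusion holds after the full Runge--Kutta step.

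The main obstacle is not any deep inequality but the algebraic bookkeeping: one must identify each of the six coefficients correctly and confirm that the factor $1/4$ in the CFL limit \eqref{eq:cfl}, rather than the more familiar $1/2$ from one-dimensional analogues, is exactly what is needed to absorb the contribution from each of the four faces of $\mathcal C_{j,k}$. The more subtle point is that the averaging identity \eqref{eq:AveragesRelation} is not automatic when $A_o$ varies from interface to interface, which is precisely why the correction \eqref{eq:Correction}--\eqref{eq:CorrectionC} must be applied before the estimate; as noted in the text, this correction preserves the well-balanced property, so positivity and well-balancing coexist.
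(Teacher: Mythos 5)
Your proposal is correct and follows essentially the same route as the paper: reduce to a single forward Euler step by convexity of the SSP Runge--Kutta scheme, use the corrected reconstruction to enforce the identity $\overline{A}_{j,k}=\frac{1}{4}\bigl[A_{j-\half,k}^{+}+A_{j+\half,k}^{-}+A_{j,k+\half}^{-}+A_{j,k-\half}^{+}\bigr]$, and then check that each of the eight interface values enters the update with a non-negative coefficient, the four ``inner'' ones via the CFL bound with the factor $\tfrac14$ and the four ``outer'' ones via the sign conventions $a^{-}\le 0\le a^{+}$, $b^{-}\le 0\le b^{+}$. The representative coefficient you compute is exactly the one appearing in the paper's proof, so there is nothing to add beyond noting that your count of ``six'' interface values should read ``eight,'' which your subsequent enumeration already handles correctly.
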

\begin{proof}
Since our Runge-Kutta numerical scheme can be written as a convex combination of Euler steps, one only needs to prove it for just one forward Euler step. The first component in equation \eqref{eq:cusemi} can be written as
\[
\begin{array}{lll}
\overline{A}_{j,k}\left(t+\Delta t\right) & = &  \overline{A}_{j,k}\left(t\right) - \frac{\Delta t}{\Delta s}\left[\left(\vect H_{j+\half,k}^F\right)^{(1)} - \left(\vect H_{j-\half,k}^F\right)^{(1)}\right] \\
&&- \frac{\Delta t}{\Delta \theta}\left[\left(\vect H_{j,k+\half}^G\right)^{(1)} - \left(\vect H_{j,k-\half}^G\right)^{(1)}\right].
\end{array}
\]
Using \eqref{eq:NumFlux}, we can rewrite it as
\begin{eqnarray*}
\overline{A}_{j,k}\left(t+\Delta t\right) & = & \left[\frac{1}{4} - \frac{\Delta t}{\Delta s}a_{j+\half,k}^+\frac{u_{j+\half,k}^- - a_{j+\half,k}^-}{a_{j+\half,k}^+ - a_{j+\half,k}^-}\right]A_{j+\half,k}^- \\
&& + \left[\frac{1}{4} + \frac{\Delta t}{\Delta s}a_{j-\half,k}^-\frac{a_{j-\half,k}^+ - u_{j-\half,k}^+}{a_{j-\half,k}^+ - a_{j-\half,k}^-}\right]A_{j-\half,k}^+\\
&& + \left[\frac{1}{4} - \frac{\Delta t}{\Delta \theta}b_{j,k+\half}^+\frac{\omega_{j,k+\half}^- - b_{j,k+\half}^-}{b_{j,k+\half}^+ - b_{j,k+\half}^-}\right]A_{j,k+\half}^- \\
&& + \left[\frac{1}{4} + \frac{\Delta t}{\Delta \theta}b_{j,k-\half}^-\frac{b_{j,k-\half}^+ - \omega_{j,k-\half}^+}{b_{j,k-\half}^+ - b_{j,k-\half}^-}\right]A_{j,k-\half}^+\\
& & -\frac{\Delta t}{\Delta s}a_{j+\half,k}^- \frac{a_{j+\half,k}^+ - u_{j+\half,k}^+}{a_{j+\half,k}^+ - a_{j+\half,k}^-}A_{j+\half,k}^+ \\
&& +\frac{\Delta t}{\Delta s}a_{j-\half,k}^+ \frac{u_{j-\half,k}^- - a_{j-\half,k}^-}{a_{j-\half,k}^+ - a_{j-\half,k}^-}A_{j-\half,k}^-\\
& &  -\frac{\Delta t}{\Delta \theta}b_{j,k+\half}^- \frac{b_{j,k+\half}^+ - \omega_{j,k+\half}^+}{b_{j,k+\half}^+ - b_{j,k+\half}^-}A_{j,k+\half}^+ \\
&& +\frac{\Delta t}{\Delta \theta}b_{j,k-\half}^+ \frac{\omega_{j,k-\half}^- - b_{j,k-\half}^-}{b_{j,k-\half}^+ - b_{j,k-\half}^-}A_{j,k-\half}^- \\
& & + \overline{A}_{j,k} - \frac{1}{4}\left[A_{j-\half,k}^+ + A_{j+\half,k}^- + A_{j,k+\half}^- + A_{j,k-\half}^+\right].
\end{eqnarray*}
The first four terms in the above equation will be non-negative under the CFL restriction \eqref{eq:cfl}. Also, since $a_{j+\half,k}^-\leq 0$, $b_{j,k+\half}^-\leq 0$, $a_{j-\half,k}^+\geq 0$ and $b_{j,k-\half}^+\geq 0$, the following four terms in the above equation are also non-negative.

\end{proof}

\section{Numerical Experiments }
\label{sec:NumTests}

Different numerical experiments are presented to show the merits of the numerical scheme and the dynamics of the flow given by the model derived in this paper. One can analyze situations where vessels exhibit non-uniform elasticity properties and the effect on the dynamics of the flow.

The velocity field in 3D views of the artery is computed as follows. First, we need to compute the curvature radius, which is given by
\[
R_c = \frac{\left(R^2 + R_\theta^2\right)^{\frac{3}{2}}}{R^2 + 2R_\theta^2 - R R_{\theta \theta}} = \frac{R\left(1 + \left(\frac{R_\theta}{R}\right)^2\right)^{\frac{3}{2}}}{1 + 2\left(\frac{R_\theta}{R}\right)^2 - \frac{R_{\theta \theta}}{R}},\:\: 
\]
and we also define 
\[
R_\theta = \parth R,\:\:\: R_{\theta\theta} = \partial^2_{\theta \theta} R.
\]
The total velocity at each point 
\[
\Big( x(s,\theta),y(s,\theta),z(s,\theta) \Big) = \Big( -R \sina\sint + x_o\left(s\right) \; , \;  R\cost \; , \;  R\cosa\sint + z_o\left(s\right) \Big)
\]
is given by
\begin{equation}
\label{eq:3DVField}
\vect{V}\left(s,\theta\right) = \left(
\begin{array}{c}
\\ \cosa \\ \\ 0 \\ \\ \sina \\ \\ 
\end{array}
\right)u + \frac{\frac{R_c}{R}}{\left(1 + \left(\frac{R_\theta}{R}\right)^2\right)^{\frac{1}{2}}} \left(
\begin{array}{c}
\\ -\sina\Big( \cost + \frac{R_\theta}{R} \sint\Big) \\ \\ -\sint + \frac{R_\theta}{R} \cost \\ \\ \cosa\Big( \cost + \frac{R_\theta}{R} \sint\Big),  \\ \\
\end{array}
\right)U_{Tang}
\end{equation}
where
\begin{equation}
\label{eq:UTang}
U_{Tang} = \frac{1}{A}\int_o^R r\vth\detj dr.
\end{equation}
For convenience and ease of notation, we all define $\mathcal{R}=R/R_o$ at the center of each cell. \\

In all cases, we apply periodic boundary conditions in $\theta$. The boundary conditions in the axial directions are specified in each numerical test. Also, the parameters used for these numerical experiments are: blood density $\rho = 1050 \text{ kg m}^{-3}$, blood viscosity $\nu = 4 \text{ cP}$, $\beta = 2$, $\gs = 9$ and $\gt = 2$.

\subsection{Horizontal vessel with tapering: evolution of perturbation}

In this first numerical test, we consider the simple case of a horizontal vessel ($\alfas = 0$) with tapering. That is, $A_o$ is given by
\[
A_o\left(s,\theta\right) = A_o^\star\left(1-sT_p\right),\:\:\:  R_o\left(s,\theta\right) = \sqrt{2A_o\left(s,\theta\right)\:},
\]  
where $T_p=0.005 \text{ cm}^{-1}$ is the tapering factor and $A_o^\star = \left(0.82\text{ cm}\right)^2$. The initial vessel's radius consist of a perturbation from a steady state. The perturbation is located in the middle of the artery. Specifically, the center is at $s^\star=25 $ cm and $\theta^\star = \frac{\pi}{4}$ rad. The initial radius is then given by
\[
R\left(0,s,\theta\right) = \mathcal{R}\left(0,s,\theta\right)R_o\left(s,\theta\right), \text{ where }
\]
\[
\mathcal{R}\left(0,s,\theta\right) = \left\{
\begin{array}{ccc}
1 & \text{  if  } & \frac{d\left(s,\theta\right)}{R_o\left(s^\star,\theta^\star\right)} > 1  \\
&& \\
1 + \frac{1}{5}\sin\left(\left[1-\frac{d\left(s,\theta\right)}{R_o\left(s^\star,\theta^\star\right)}\right]\frac{\pi}{2}\right) & \text{  if  } & \frac{d\left(s,\theta\right)}{R_o\left(s^\star,\theta^\star\right)} \leq  1
\end{array}
\right.,
\]
and 
$$
d\left(s,\theta\right) = \sqrt{\frac{1}{4}\left[x\left(s^\star,\theta^\star\right) - x\left(s,\theta\right)\right]^2 + \left[y\left(s^\star,\theta^\star\right) - y\left(s,\theta\right)\right]^2 + \left[z\left(s^\star,\theta^\star\right) - z\left(s,\theta\right)\right]^2}.
$$
Neumann boundary conditions are imposed at both ends ($s=0,s_L$) with $s_L=50 \text{ cm}$.

\begin{figure}[h!]
\begin{center}
{
\includegraphics[width=0.95\textwidth]{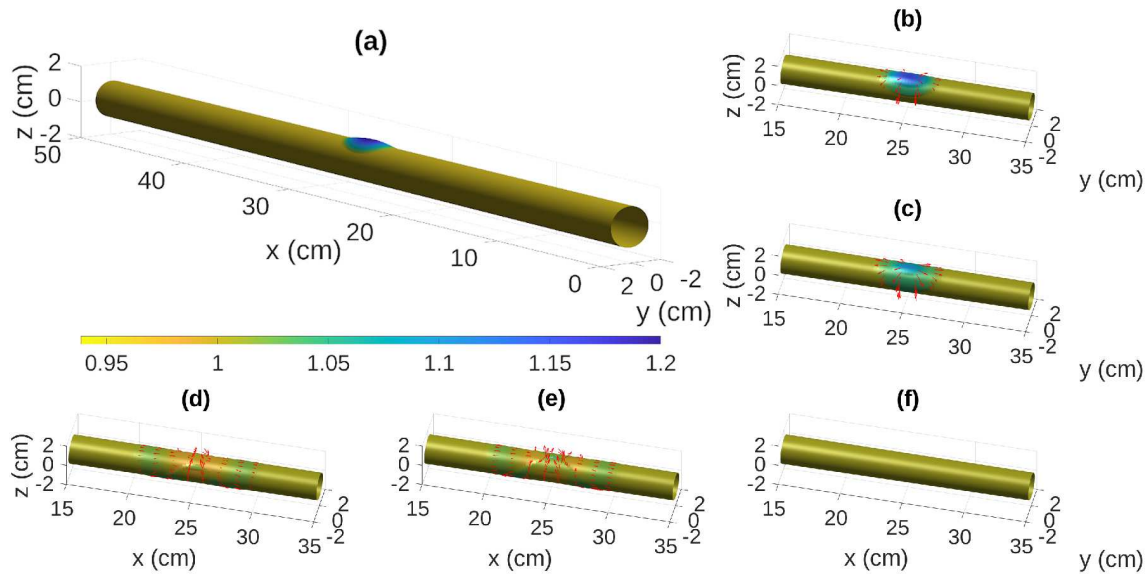}
}
\end{center}
\caption{\label{fig:horizontalbf} A 3D view of the vessel. The arrows near the wall indicate the velocity field given by equation \eqref{eq:3DVField}. The color bar indicates the ratio of the vessel radius at time $t$ and its initial value ($R(t\; ; s,\theta)/R_o(s, \theta)$). The initial conditions, which consists of a radius perturbation near $s^\star=25 \text{ cm}$ and $\theta^\star = \pi/4$ rad is shown in (a), where the entire artery is visualized. The rest of the panels show a section $15 \text{ cm } \le s \le 35 \text{ cm}$ where the perturbation evolves at times $t= 0.0005 \text{ s}$ (b), $t= 0.001 \text{ s}$ (c), $t= 0.0045 \text{ s}$ (d),  $t= 0.005 \text{ s}$ (e) and  $t= 0.1 \text{ s}$ (f). The arrows indicate the 3D velocity field given by equation \eqref{eq:3DVField}. }
\end{figure} 

In this first numerical test, the initial conditions consist of a radius perturbation from a steady state. That is, the transmural pressure is zero everywhere in the artery, except in an area near $(s^\star,\theta^\star)$ where the radius is above the steady state one and the transmural pressure becomes positive. Figure \ref{fig:horizontalbf} shows a 3D view of the artery with the above initial conditions in panel (a). This generates a displacement that consist of a radial expansion at early times, and it can be observed in panels (b) and (c). The colobar indicates the ratio of the vessel radius at time $t$ and its initial value ($R(t\; ; s,\theta)/R_o(s, \theta)$), which can help us identify the evolution of the perturbation. The initial perturbation covers only a partial side of the artery's wall and the displacement goes in both the axial and angular directions. At later times in panels (d) and (e), the displacement has already reached the opposite side of the wall and it has come back to the initial location by periodicity in the angular direction. The last panel (f) shows the solution at time $t=0.1 \text{ s}$ where the displacement has already propagated in the axial direction outside the visualized region. As a result, the artery has recovered its initial unperturbed steady state.

\subsection{Aorta vessel with discharge}

\label{sec:AortaCC}

 \begin{table}
\begin{center}
\vline
\begin{tabular}{llllll}
\hline
Segment & \vline Length & \vline  Left radius & \vline  Right radius  & \vline Left speed  &  \vline Right speed  \\ 
 & \vline $\text{ cm}$ & \vline $\text{ cm}$  & \vline $\text{ cm}$  & \vline $\text{ ms}^{-1}$ &  \vline $\text{ ms}^{-1}$ \\ 
\hline
I & \vline  7.0357  & \vline 1.52 & \vline  1.39   & \vline 4.77 & \vline  4.91 \\ 
\hline
II & \vline  0.8  & \vline 1.39 & \vline  1.37  & \vline 4.91 & \vline  4.93 \\ 
\hline
III & \vline  0.9  & \vline 1.37 & \vline  1.35   & \vline 4.93 & \vline  4.94 \\ 
\hline
IV & \vline  6.4737  & \vline 1.35 & \vline  1.23  & \vline 4.94 & \vline  5.09 \\ 
\hline
V & \vline  15.2  & \vline 1.23 & \vline  0.99  & \vline 5.09 & \vline  5.43 \\ 
\hline
VI & \vline  1.8  & \vline 0.99 & \vline  0.97  & \vline 5.43 & \vline  5.46 \\ 
\hline
VII & \vline  0.7  & \vline 0.97 & \vline  0.962  & \vline 5.46 & \vline  5.48 \\ 
\hline
VIII & \vline  0.7  & \vline 0.962 & \vline  0.955   & \vline 5.48 & \vline  5.49 \\ 
\hline
IX & \vline  4.3  & \vline 0.955 & \vline  0.907  & \vline 5.49 & \vline  5.57 \\ 
\hline
X & \vline  4.3  & \vline 0.907 & \vline  0.86  & \vline 5.57 & \vline  5.66 \\ 
\hline
\end{tabular}
\hspace{-7 pt} \vline
\end{center}
\caption{\label{table:Data} Description of aorta's geometry and dimensions. The segments, their lengths, left and right radii are shown in the first four columns. The last two columns show the left and right pulse wave speeds. Such values were obtained from \cite[Table IV]{xiao2014systematic}.}
\end{table}

\begin{figure}[h!]
\begin{center}
{\includegraphics[width=0.8\textwidth]{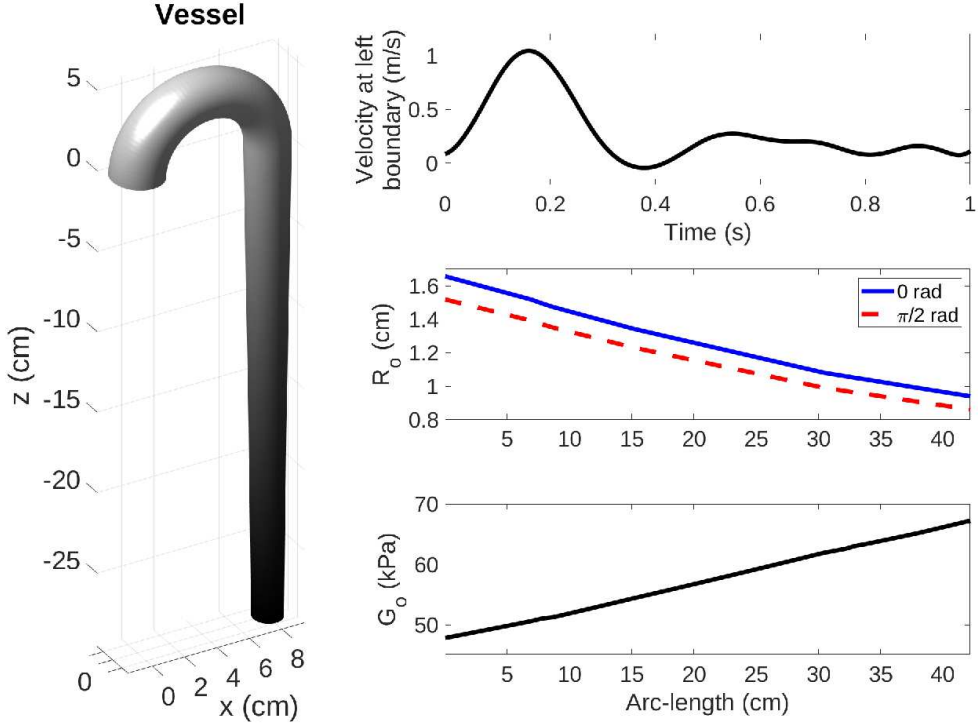}}
\end{center}
\caption{\label{fig:IC_ALL} Blood flow simulation passing through the full aorta. Left panel: 3D view of the aorta. Top right: Velocity at the left boundary in a cardiac cycle as a function of time. Middle right: Profile of $R_o$ as a function of arc-length $s$ for different angles. Bottom right: Profile of $G_o$ as function of arc-length $s$. }
\end{figure}

The previous case showed that the model and the numerical scheme produces good results for perturbations to steady states in horizontal vessels. For the rest of the numerical tests, we will consider geometries similar to an idealized aorta without branches. 

Let $R_o^\star\left(s\right)$ be the piecewise linear function of $s$ obtained according to the radius at diastolic pressure shown in \cite[Table IV]{xiao2014systematic}, which we present in table \ref{table:Data} for the convenience of the reader. The initial conditions for the artery's geometry is described by cross sections given by
\[
R_o\left(s,\theta\right) = R_o^\star\left(s\right) h\left(\theta\right),
\]
where $h(\theta)$ is a function that determines the type of cross section that we may have. We note that we obtain circular cross sections when $h\left(\theta\right)$ is constant. As it is reported in \cite{kyriakou2020analysing}, however, we may observe cross sections that have an elliptical-like shape in the aorta. We then choose
\[
h\left(\theta\right) = \sqrt{\frac{1- \xi^2 \sint^2}{1-\xi^2}},
\]
where $\xi$ is the eccentricity and $\xi\in\left[0,1\right)$. Graphs of $R_o$ as a function of axial position $s$ for different values of $\theta$ are displayed in the middle right panel of Figure \ref{fig:IC_ALL} .

The parameter $G_o^\star\left(s\right)$ is given by equation \eqref{eq:GoDef}, where $c_d$ is a piecewise linear function according to the velocity at diastolic pressure shown in Table \ref{table:Data} and in \cite[Table IV]{xiao2014systematic}. A graph of $G_o$ is displayed in the bottom right panel of Figure \ref{fig:IC_ALL}. As an approximation to the aorta's curvature, the angle $\alfas$ is given by
\[
\alfas = \left\{
\begin{array}{ccc}
\left[1 - \frac{s}{12.63\text{ cm} }\right] \frac{\pi}{2} & \text{  if  } & 0\leq s \leq 12.63\text{ cm},  \\
& & \\
-\frac{\pi}{2} & \text{  if  } & s> 12.63\text{ cm} .
\end{array}
\right.
\]
That is, the vessel is straigth up at the upstream boundary $\alpha(0 \text{ cm}) = \pi/2$ and points down at $s=12.63 \text{ cm}$, where $\alpha(12.63 \text{ cm}) = -\pi/2$. Figure \ref{fig:IC_ALL} (left panel) shows a 3D view of the tapered vessel at time $t=0$ s. Here, we use 200 grid points in the axial direction and 180 grid points in the angular direction. The initial condition is given by $A=A_o$, $u=0$ and $L = 0$ in a tilted vessel with elliptical geometric shape ($\xi = 0.4$), which would have corresponded to a steady state if the vessel was horizontal. \\

At the left boundary ($s=0$), we impose a velocity that corresponds to a cardiac cycle (to be specified below) and Dirichlet boundary conditions for the radius $R=R_o$ and $\omega=0$. The discharge at the left boundary breaks the balance and induces a moving state. Neumann boundary conditions are imposed at the right boundary. The time series for the velocity at the left boundary was obtained from \cite{Canic2002}, and it was approximated using the first 15 elements of its Fourier decomposition. Initially, the velocity at the left boundary increases up to speeds above 1 $\text{ms}^{-1}$. A graph of the inlet velocity as a function of time can be found in the top right panel of Figure \ref{fig:IC_ALL} . 

\begin{figure}[h!]
\begin{center}
{\includegraphics[width=\textwidth]{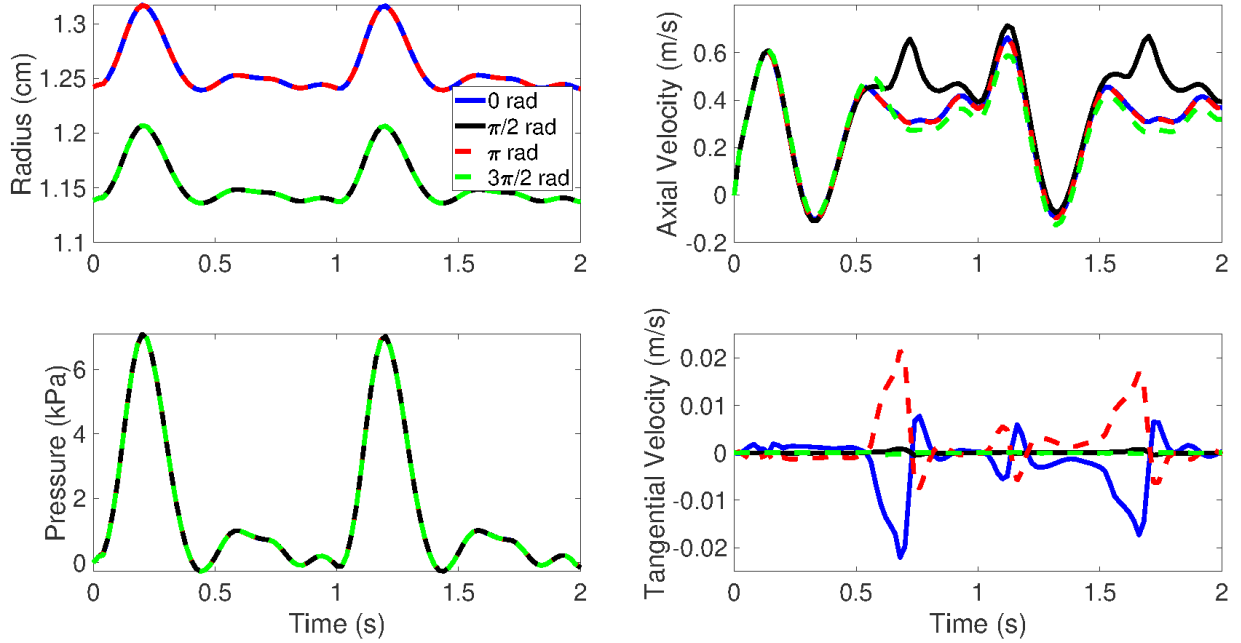}}
\end{center}
\caption{\label{fig:CC_Profiles} Profiles as a function of time at $s=21.10$ cm. Top left: Radius $R$ at different angles. Top right: Axial velocity $u$. Bottom left: Pressure $p$. Bottom right: Tangential velocity $U_{Tang}$.}
\end{figure}

In Figure \ref{fig:CC_Profiles}, we show the evolution of four variables, radius $R$, axial velocity $u$, pressure $p$ and tangential velocity $U_{Tang}$, over 2 seconds at $s = 21.10$ cm. Here, the vessel's radius $R$ is increased due to the influence of the inlet velocity given by the cardiac cycle. On the hand, the transmural pressure reaches its maximum of approximately $7.1$ kPa near $t=0.2$ seconds, followed by a decay. Here, the transmural pressure profile is similar for each $\theta$. In the top right panel we observe the evolution of the axial velocity. Since the initial condition is $u=0$, the profile starts with an increment to $0.61\text{ ms}^{-1}$, followed by a decay to $-0.1\text{ ms}^{-1}$, and by an increment to $0.44 \text{ ms}^{-1}$ reached at $0.52 \text{ s}$. After this time, the profiles given by different $\theta$ values diverge from each other. At $\theta=\pi/2 \text{ rad}$ the velocity reached a maximum of approximately $0.69 \text{ ms}^{-1}$ at $t=0.70$ seconds, while the other profiles show a decay to $0.31 \text{ ms}^{-1}$ at the same time. After that, the profiles evolve in a quasi-periodic way.\\

Finally, the tangential velocity profiles are shown in the bottom right panel of Figure \ref{fig:CC_Profiles}. Here, we observe that the value is much lower than the axial velocity. One can observe that the tangential velocities are much weaker at angles $\theta=\pi/2$ and $\theta=3\pi/2$ rad, while the other two ($\theta=0$ and $\theta=\pi$ rad) are approximately opposite in sign.

\subsection{Idealized aorta vessel with a bulge}

\begin{figure}[h!]
\begin{center}
{\includegraphics[width=\textwidth]{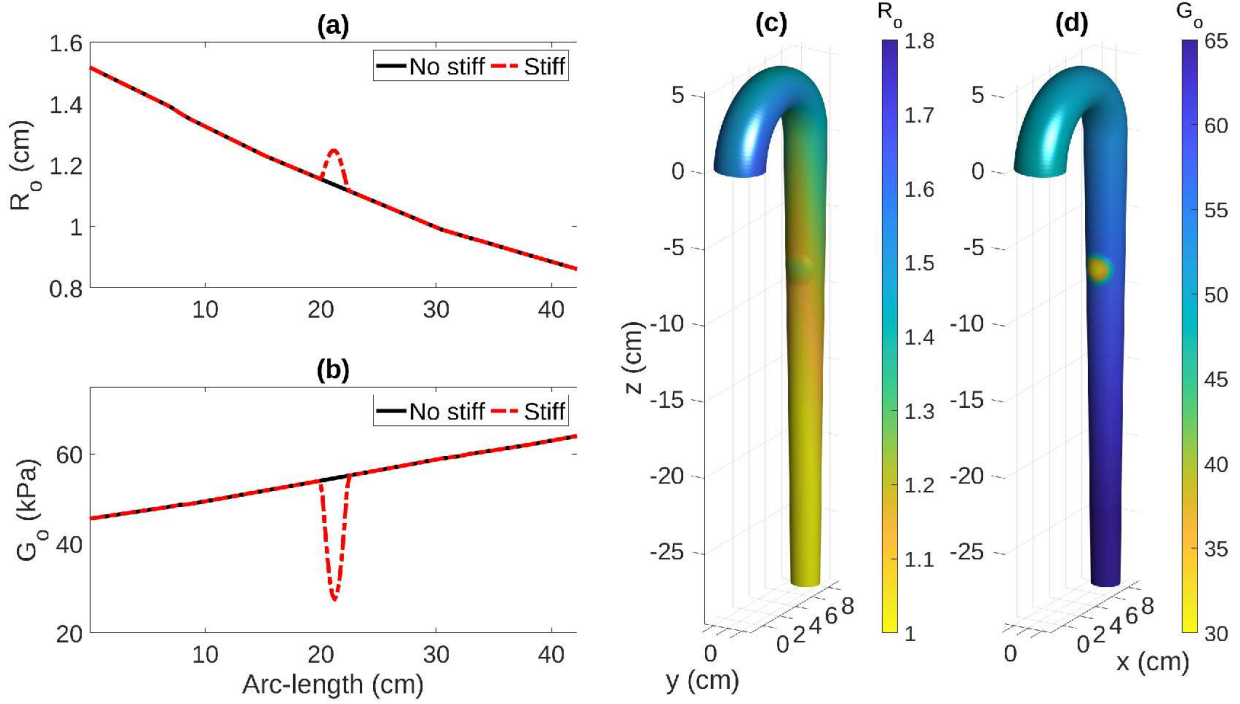}}
\end{center}
\caption{\label{fig:Ex2_Initial_Ro_Go} Parameters for an artery with a bulge in a localized region where the wall is less rigid. In black solid line, profiles of $G_o$ (panel (a)) and $R_o$  (panel (b)) at $\theta = 3\pi/2 \text{ rad}$ are shown, while the red dotted lines correspond to the base case with no bulging. The profiles are given by equations \eqref{eq:BulgeG0} and \eqref{eq:BulgeR0}. Panel (c): 3D visualization of an idealized aorta where the color bar denotes $G_o$ values. Panel (d): same a in middle panel with $R_o$ values in the color bar.}
\end{figure}

Non-uniform elasticity properties in a vessel may be caused by diseases such as stenosis and aneurysms. In this numerical test, we analyze possible changes in the flow dynamics when the parameters $G_o$ and $R_o$ are non-uniform in a localized regions in the artery's wall. In particular, $G_o$ is reduced at $(s= s^\star = 21 \text{ cm},\theta = \theta^\star = 3\pi/2 )$ and $R_o$ is increased near that point, when compared to the previous case. Such changes in the parameters are aimed at simulating an artery with a bulge in a localized region where the artery is also less rigid. \\

The parameters $G_o$ and $R_o$ are given by
\begin{equation}
\label{eq:BulgeG0}
G_o\left(s,\theta\right) = \left\{
\begin{array}{ccc}
&& \\
G^\star_o\left(s\right) & \text{  if  } & d\left(s,\theta\right) > h\left(\theta\right) R^\star_o\left(s\right), \\
&& \\
\left[1 - \frac{1}{2}\sin\left(\left[1-\frac{d\left(s,\theta\right)}{h\left(\theta\right) R_o^\star\left(s\right)}\right]\frac{\pi}{2}\right)\right] G^\star_o\left(s\right) & \text{  if  } & d\left(s,\theta\right) \leq h\left(\theta\right) R^\star_o\left(s\right), \\
&&
\end{array}
\right.
\end{equation}
and
\begin{equation}
\label{eq:BulgeR0}
R_o\left(s,\theta\right) = \left\{
\begin{array}{ccc}
&& \\
h\left(\theta\right) R^\star_o\left(s\right) & \text{  if  } & d\left(s,\theta\right) > h\left(\theta\right) R^\star_o\left(s\right), \\
&& \\
\left[1 + \frac{1}{5}\sin\left(\left[1-\frac{d\left(s,\theta\right)}{h\left(\theta\right) R^\star_o\left(s\right)}\right]\frac{\pi}{2}\right)\right]h\left(\theta\right) R^\star_o\left(s\right) & \text{  if  } & d\left(s,\theta\right) \leq h\left(\theta\right) R^\star_o\left(s\right), \\
&&
\end{array}
\right.
\end{equation}
where 
$$
d\left(s,\theta\right) = \sqrt{\frac{1}{4}\left[x\left(s^\star,\theta^\star\right) - x\left(s,\theta\right)\right]^2 + \left[y\left(s^\star,\theta^\star\right) - y\left(s,\theta\right)\right]^2 + \left[z\left(s^\star,\theta^\star\right) - z\left(s,\theta\right)\right]^2}.
$$
A graph of the parameters at $\theta = 3\pi/2 \text{ rad}$ as a function of $s$ is shown in panels (a) and (b) in Figure \ref{fig:Ex2_Initial_Ro_Go} . A 3D visualization of the aorta using $R_o$ (c) and $G_o$ (d) in the color bar are displayed to localize the region where the elasticity properties of the artery vary. 

\begin{figure}[h!]
\begin{center}
{\includegraphics[width= \textwidth]{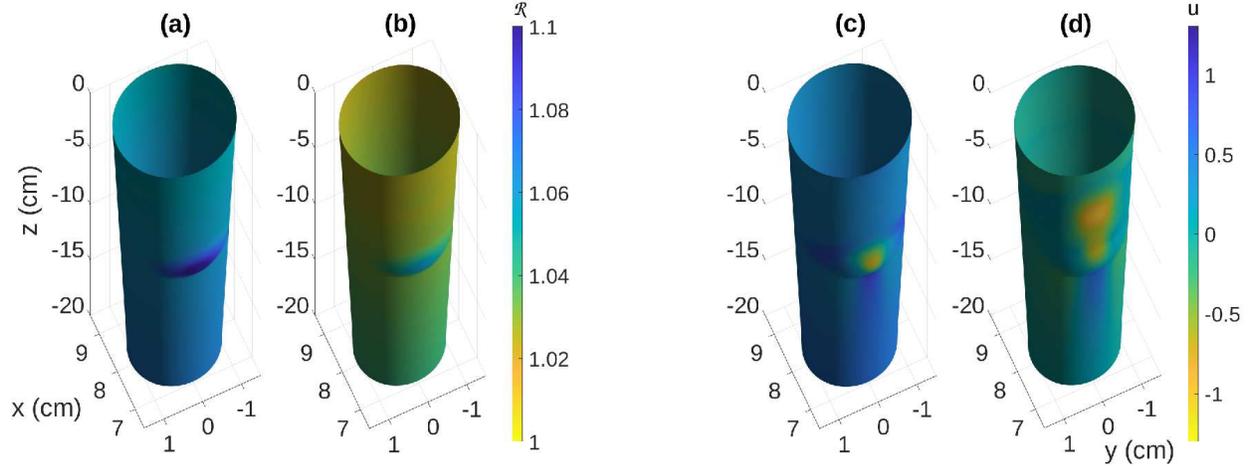}}
\end{center}
\caption{\label{fig:Ex2_R_Us} Panels (a) and (b): 3D view of the artery with color bar computed based on $\mathcal{R}=R/R_o$ at times $t=0.2\text{ s}$ and $t = 0.3 \text{ s}$ respectively. Panels (c) and (d): Same as in (a) and (b) with a color bar computed based on $u$.}
\end{figure}

As initial conditions, we set
\[
R\left(0,s,\theta\right)=R_o\left(s,\theta\right),\:\:\:\: u\left(0,s,\theta\right)=0\:\:\:\text{ and }\:\:\: L\left(0,s,\theta\right)=0.
\]

Figure \ref{fig:Ex2_R_Us}  shows the effect of the bulge with non-uniform elasticity parameters in the flow dynamics. For instance, panels (a) and (b) shows a 3D view of the artery near the bulge where the color bar indicates the ratio $R/R_o$ at times $t=0.2 \text{ s}$ and $t=0.3 \text{ s}$ respectively. Such ratio indicates how much the artery's radius has been deformed from the initial conditions. One can observe a stronger deviation from the initial conditions (about 10\%) near the bulge at $t=0.2 \text{ s}$, when compared to the rest of the artery. Such deviation is reduced at $t=0.3\text{ s}$. On the other hand, a color bar computed based on the axial velocity is displayed in panels (c) and (d). We observe a negative displacement in the upper side of the bulge and a positive displacement in the lower side of the bulge at $t=0.3 \text{ s}$. However, the axial velocity becomes positive everywhere at later times (not shown) due to gravity and the fluid discharge in the upstream boundary. 

\begin{figure}[h!]
\begin{center}
{\includegraphics[width=\textwidth]{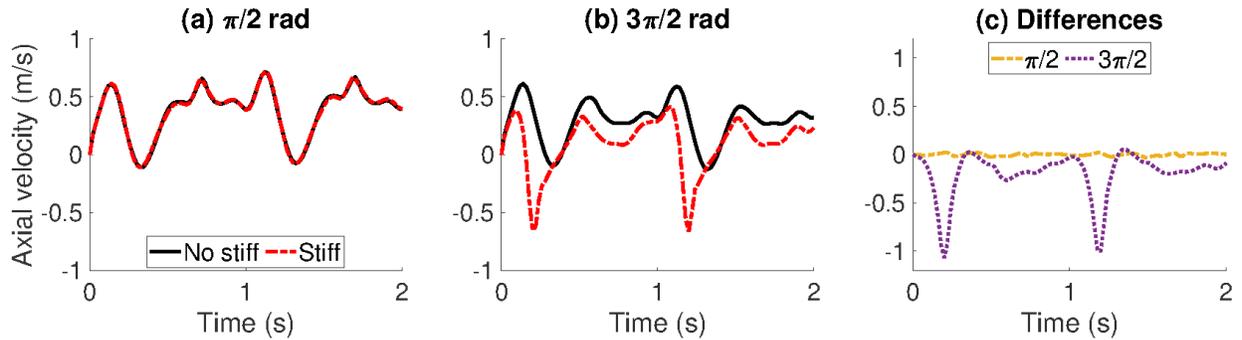}}
\end{center}
\caption{\label{fig:Ex2_Velocities} Axial velocity profiles as a function of time at $s=21 \text{ cm}$. Panel (a): Profiles at $\theta = \pi/2$ rad. Panel (b): Profiles at $\theta = 3\pi/2 $ rad. Panel (c): Profile differences for $\theta = \pi/2$ rad (ocher dashed line) and $\theta = 3\pi/2$ rad (purple dotted line).}
\end{figure}

Figure \ref{fig:Ex2_Velocities} exhibits a quantification of the observations discussed in Figure \ref{fig:Ex2_R_Us}. Specifically, the axial velocity as a function of time at $s=s^\star$, $\theta = \pi/2$ and $\theta = 3\pi/2 \text{ rad}$ are displayed in panels (a) and (b) respectively. For comparison, we include a graph corresponding to the base case in Section \ref{sec:AortaCC} . The bulge is at $\theta = 3\pi/2 \text{ rad}$ (panel (b)), where the velocity is decreasing near it. The differences could be up to about $1 \text{ ms}^{-1}$, as we see it in panel (c). Panel (a) shows the axial velocities experienced by the fluid on the opposite side of the wall, where the impact of the bulge does not seem to be significant in the time window considered here.

\subsection{Vortex-like structure in aorta vessel with a bulge}
\begin{figure}[h!]
\begin{center}
{\includegraphics[width=\textwidth]{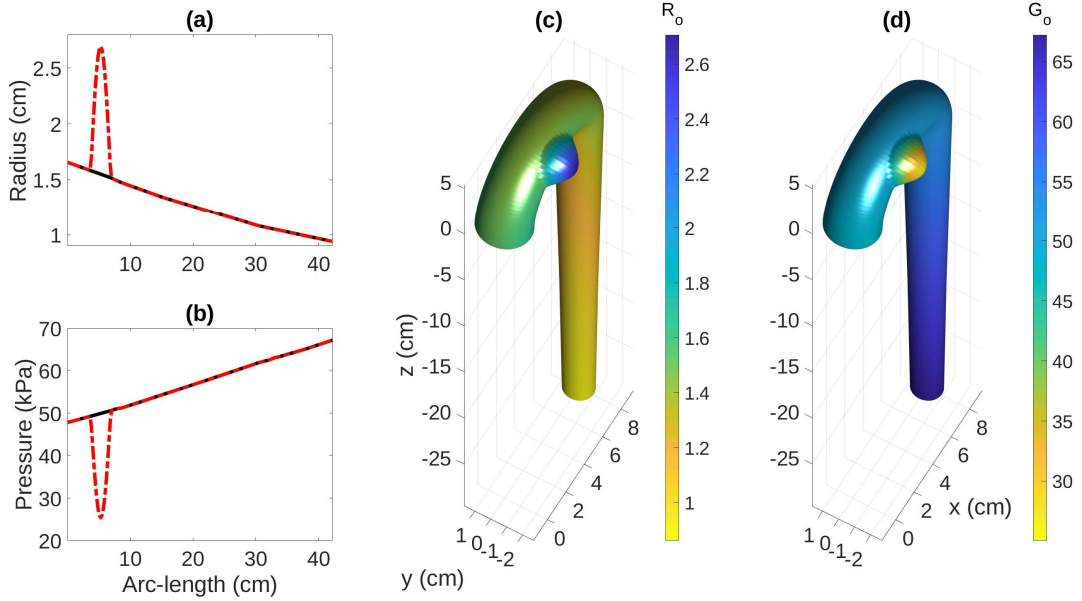}}
\end{center}
\caption{\label{fig:Ex5_Profiles} Parameters $R_o$ and $G_o$ given by equations  \eqref{Ex5:G_o} and \eqref{Ex5:R_o} as a function of $s$ at $\theta = \pi \text{ rad}$ are shown in panels (a) and (b) respectively (red dotted lines). The black solid lines correspond to the base case in Section \ref{sec:AortaCC}. Panel (c): 3D view with color bar denoting $R_o$. Panel (d): same as in (c) with $G_o$ values in the color bar.}
\end{figure}

In this numerical example, we consider a situation where the $G_o$ parameter has a negative perturbation in a localized lateral section of the artery near the upstream boundary and $R_o$ is also increased in the same area, as specified below. This situation is associated with an idealized thoracic aortic aneurysm \cite{ho2017modelling}, where the artery's wall is less rigid in a localized zone. The two parameters $G_o$ and $R_o$ are given by

\begin{figure}[h!]
\begin{center}
{\includegraphics[width=\textwidth]{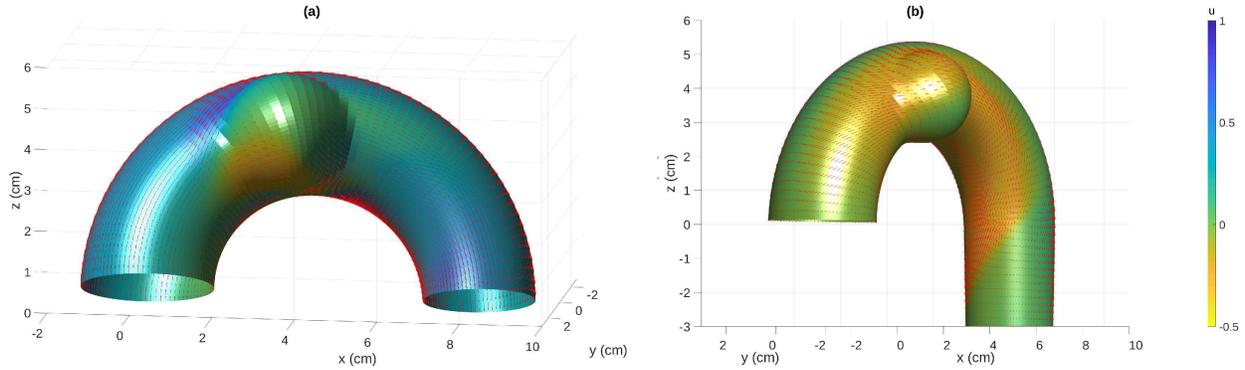}}
\end{center}
\caption{\label{fig:Ex5_Vortex} Circulation pattern in an idealized aorta vessel with a bulge. Three dimensional views of the artery with the velocity field at $t= 0.2 \text{ s}$ (a) and $t=0.4 \text{ s}$ (b) are shown where the parameters $G_o$ and $R_o$ are given by equations \eqref{Ex5:G_o} and \eqref{Ex5:R_o}. The arrows indicate the 3D velocity field given by equation \eqref{eq:3DVField}. }
\end{figure}

\begin{equation}\label{Ex5:G_o}
G_o\left(s,\theta\right) = \left\{
\begin{array}{ccc}
&& \\
G^\star_o\left(s\right) & \text{  if  } & d\left(s,\theta\right) > h\left(\theta\right) R^\star_o\left(s\right), \\
&& \\
\left[1 - \frac{1}{2}\sin\left(\left[1-\frac{d\left(s,\theta\right)}{h\left(\theta\right) R_o^\star\left(s\right)}\right]\frac{\pi}{2}\right)\right] G^\star_o\left(s\right) & \text{  if  } & d\left(s,\theta\right) \leq h\left(\theta\right) R^\star_o\left(s\right), \\
&&
\end{array}
\right.
\end{equation}
and
\begin{equation}
\label{Ex5:R_o}
R_o\left(s,\theta\right) = \left\{
\begin{array}{ccc}
&& \\
h\left(\theta\right) R^\star_o\left(s\right) & \text{if} & d\left(s,\theta\right) > h\left(\theta\right) R^\star_o\left(s\right), \\
&& \\
\left[1 + \frac{3}{4}\sin\left(\left[1-\frac{d\left(s,\theta\right)}{h\left(\theta\right) R^\star_o\left(s\right)}\right]\frac{\pi}{2}\right)\right]h\left(\theta\right) R^\star_o\left(s\right) & \text{if} & d\left(s,\theta\right) \leq h\left(\theta\right) R^\star_o\left(s\right), \\
&&
\end{array}
\right.
\end{equation}
where 
$$
d\left(s,\theta\right) = \sqrt{\left[x\left(s^\star,\theta^\star\right) - x\left(s,\theta\right)\right]^2 + \left[y\left(s^\star,\theta^\star\right) - y\left(s,\theta\right)\right]^2 + \left[z\left(s^\star,\theta^\star\right) - z\left(s,\theta\right)\right]^2},
$$
$$ s^\star = 5 \text{ cm},\theta^\star = \pi \text{ rad}.$$

A 3D view of the artery at times $t=0.2 \text{ }$ and $t=0.4 \text{ s}$ can be found in panels (a) and (b) of Figure \ref{fig:Ex5_Vortex}, respectively. In each panel, the velocity field is shown to analyze the change in the dynamic. The colobar shows the axial velocity contours. Panel (a) shows a section of the artery near the perturbation. The bulge induces a vortex-like structure at time $t= 0.2\text{ s}$ in the lower region of the bulge but the flow moves in the downstream direction after it passes that region where the artery is less rigid. Panel (b) shows the velocity field in a larger area at time $t=0.4 \text{ s}$. The circulation pattern that was initially found near the bulge has now been extended to a larger region, where the axial velocity is negative in the right side of the artery ($\pi/2 \le \theta \le 3\pi/2$) near the bulge and positive on the opposite side. Circulation patterns can be found on other simulations. See for instance \cite{tan2009analysis}.


\section{Conclusions}

Mathematical models for blood flows can serve as a tool for evaluations before a surgical treatment. The simulations rely on accurate numerical solutions to the corresponding Partial Differential Equations. A timely evaluation requires a prompt numerical solution. Three dimensional models provide detailed information of the fluid's evolution, giving accurate and realistic results. However, they involve a high computational cost and are not always a practical tool for the above goals. As an alternative, one dimensional models have been derived in the literature. Those models consist of limiting equations that assume the cross sections to be circular with a small radius when compared to the artery's length. Of course, those models involve a low computational cost but they are limited by the conditions used to derive them. Although they have shown to be useful to simulate pressure waves, one looses detailed information of the artery's evolution. In this work, we have presented a new intermediate two-dimensional model that allows for arbitrary cross sections. The limiting model is valid for small cross-sectional ratios and other reasonable assumptions. We present this model as an alternative with a better balance between realism and computational cost. The resulting system is conditionally hyperbolic and the spectral properties are described. We have also provided a well-balanced positivity-resulting central-upwind scheme to obtain numerical results. We tested it in idealized aorta models with damaged areas among other scenarios. Our results showed that one can obtain simulations with more details in the artery's evolution with a model that has a much lower computational cost when compared to three-dimensional models. 

\section*{Acknowledgments}
C. Rosales-Alcantar was supported by Conacyt National Grant 701892. G. Hernandez-Duenas was supported in part by grants UNAM-DGAPA-PAPIIT IN113019 \& Conacyt A1-S-17634. Some simulations were performed at the Laboratorio Nacional de Visualizaci\'on Cient\'ifica Avanzada at UNAM Campus Juriquilla, and the Authors received technical support from Luis Aguilar, Alejandro De Le\'on, and Jair Garc\'ia from that lab.

\appendix

\section{Derivation of the model}
\label{appendix:Derivation}

The model is derived in this appendix. The first step is the description of the Navier-Stokes equations in cylindrical coordinates. For that end, let us define the gradients in cartesian and cylindrical coordinates by 
\begin{equation}
\grad = \left(\partx,\party,\partz\right),\:\:\: \text{ and } \:\:\: \grad_c = \left(\parts,\partr,\parth\right),
\end{equation}
respectively, where $(x,y,z)$ and $(s,r,\theta)$ are related by equation \eqref{ncs}. The corresponding velocity fields are given by
\begin{equation}
\vect V = \left(V_x,V_y,V_z\right),\:\:\:\:\:\: \vect V_c = \left(V_s,V_r,V_{\theta}\right).
\end{equation}

Applying change of variables, we find that the velocity field in the cylindrical coordinates is given by
\begin{equation}\label{nvv}
\begin{array}{rcl}
V_s & = & \frac{r}{\detj}\Big\{\cosa V_x + \sina V_z\Big\},\\
V_r & = & -\sina\sint V_x + \cost V_y + \cosa\sint V_z, \\
V_{\theta} & = & \frac{1}{r}\Big\{ -\sina\cost V_x - \sint V_y + \cosa\cost V_z \Big\}.
\end{array}
\end{equation}

The partial derivatives in cylindrical coordinates are given in terms of the derivatives in cartesian coordinates by
\begin{equation}
\begin{array}{rcl}
\parts & = & \frac{\detj}{r}\Big\{ \cosa \partx + \sina \partz \Big\} \\
\partr & = & -\sina\sint\partx + \cost\party + \cosa\sint\partz \\
\parth & = & r\Big\{-\sina\cost\partx -\sint\party + \cosa\cost\partz\Big\}.
\end{array}
\end{equation}

We take the incompressible Navier-Stokes equations with varying density as the full system to be reduced. Such system can be written as
\begin{equation} \label{nsg_cartesian}
\begin{array}{lcl}
\dermat \; \rho & = & 0,\\
\dermat\left(\rho V_x\right) & = & -\partx \; P + \nu \lap \; V_x , \\
\dermat\left(\rho V_y\right) & = & -\party \; P + \nu \lap \; V_y , \\
\dermat\left(\rho V_z\right) & = & -\partz \; P + \nu \lap \; V_z  - \rho g, \\
\grad \cdot \vect V & = & 0.
\end{array}
\end{equation}
We need to re-write the divergence, material derivative, and Laplacian in cylindrical coordinates. Let $\left(F_1,F_2,F_3\right)$ a vectorial field. Then,
\begin{equation}
\grad\cdot\left(F_1,F_2,F_3\right) = \frac{1}{\vert J\vert }\grad_c\cdot\left[\vert J\vert \left(\tilde{F}_1,\tilde{F}_2,\tilde{F}_3\right)\right],
\end{equation}

where the vector field $\left(\tilde{F}_1,\tilde{F}_2,\tilde{F}_3\right)$ is given by
\begin{equation}
\begin{array}{rcl}
\tilde{F}_1 & = & \frac{r}{\detj} \Big\{\cosa F_1 + \sina F_3\Big\}, \\
\tilde{F}_2 & = & \cost F_2 + \sint\left[-\sina F_1 + \cosa F_3\right] ,\\
\tilde{F}_3 & = & \frac{1}{r}\Big\{-\sint F_2 + \cost\left[-\sina F_1 + \cosa F_3\right]\Big\}.
\end{array}
\end{equation}

In cylindrical coordinates, the material derivative can be expressed as
\begin{equation}
\begin{array}{rcl}
\dfrac{Df}{Dt} & = & \partt\left(f\right) + \vect V_c \cdot \grad_c\left(f\right) \\
    & = & \dfrac{1}{\detj}\Big\{\partt\left(\detj f\right) + \grad_c\cdot\left(\detj f \vect V_c\right) - f \grad_c \cdot \left(\detj \vect V_c\right) \Big\}.
\end{array}
\end{equation}

In the case of incompressible fluids, the last term vanishes and we obtain
\begin{equation}
\frac{Df}{Dt} = \frac{1}{\detj}\Big\{\partt\left(\detj f \right) + \parts\left(\detj f V_s\right)+\partr\left(\detj f V_r\right) + \parth\left(\detj f V_\theta\right)\Big\}.
\end{equation}

Furthermore, the Laplacian can be expressed as
\[
\lap\left(f\right) = \frac{1}{\detj}\parts\Bigg(\frac{r^2}{\detj}\parts \; f \Bigg) + \frac{1}{\detj}\partr\Big(\detj \partr \;  f \Big) + \frac{1}{\detj}\parth\Bigg(\frac{\detj}{r^2} \parth \; f \Bigg).
\]

Straightforward but long calculations gives the Navier-Stokes equations \eqref{ncs} in cylindrical variables. The new system is given by 
\begin{equation}
\label{nsg_cylindrical}
\begin{array}{lcl}
\dermat\left(\rho\right)  & = & 0,\\
\dermat\left(\rho \left[\detjr\right]^2\vs\right) & = & - \parts P_2 + \detjr\parts\left(\detjr\right)\rho \vs^2 - \sina\rho g, \\
& & + \nu\Bigg\{ \lap\left(\left[\detjr\right]^2\vs\right) - \lap\left(\left[\detjr\right]^2\right)\vs + 2\rdetj\partr\left[\detjr\right]\parts\left(\vr\right) \\ 
& &  + 2\rdetj\parth\left(\detjr\right)\parts\left(\vth\right) + \grad_c\left(\rdetj\parts\left(\detjr\right)\right)\cdot\vect V_c \Bigg\},\\
\dermat\left(\rho V_r\right) & = &  -\partr\left(P_2\right) + \detjr\partr\left(\detjr\right)\rho \vs^2 + r\rho\vth^2 \\
& & + \nu\left\{ \lap\left(\vr\right) -2\rdetj\partr\left(\detjr\right)\parts\left(\vs\right) - \frac{2}{r}\parth\left(\vth\right) \right. \\
&& - \left(\rdetj\right)^2\parts\left(\detjr\partr\left(\detjr\right)\right)\vs \\
& & \left. - \frac{1}{\detj^2}\left[\left(\detjr\right)^2 + \left(\detjr -1\right)^2\right]\vr - \frac{\partr\left(\detj\right)\parth\left(\detj\right)}{\detj^2}\vth \right\}, \\
\dermat\left(\rho r^2 V_\theta\right) & = & - \parth\left(P_2\right) + \detjr\parth\left(\detjr\right)\rho\vs^2 \\
& & + \nu\Bigg\{ \lap\left(r^2\vth\right) - 2\rdetj\parth\left(\detjr\right)\parts\left(\vs\right)  \\
& & + \frac{2}{r}\parth\left(\vr\right) - \frac{2}{\detj}\partr\left(\detj r\vth\right) - \left[\rdetj\right]^2\parts\left(\detjr\parth\left(\detjr\right)\right)\vs \\
&& + \frac{r}{\detj^2}\parth\left(\detjr\right)\vr - \left[\rdetj\parth\left(\detjr\right)\right]^2 \vth \Bigg\}, \\
 \grad_c \cdot \left(\detj\vect V_c\right) & = & 0,
\end{array}
\end{equation}
where $P_2 = P+r \cos(\alpha(s)) \sin(\theta) \rho g$ is the transmural pressure.

\subsection{The reduced equations}

We carry out an asymptotic analysis to remove small terms in the equations that do not add a significant contribution in the budget and allows us to simplify the model. Following \cite{vcanic2003mathematical}, we define $V_{s,0}$, $V_{r,0}$, and $V_{\theta,0}$ be the characteristic radial, axial and angular velocities. Let also $\lambda$ and $R_0$ be the characteristic axial and radial lengthscales. Each quantity is non-dimensionalized as $r=R_0 \tilde{r}$, $s = \lambda \tilde{s}$, $t = \frac{\lambda}{V_0} \tilde{t}$, $V_s = V_{s,0} \tilde{V}_s$, $V_r = V_{r,0}\tilde{V}_r$, $V_\theta = V_{\theta,0} \tilde{V}_\theta$, $P=\rho V_0^2 \tilde{P}$. Following equation \eqref{eq:Epsilon}, the small parameter in this expansion is the ratio between radial and axial lengthscales
\[
\epsilon : = \frac{R_0}{\lambda}=\frac{V_{r,0}}{V_{s,0}}.
\]
This is a reasonable assumption because this ratio is about $\frac{R_0}{\lambda} = \mathcal{O}\left(10^{-2}\right)$ for the aorta between the renal and iliac arteries 
. \\

The non-dimensional version of the model is given by
\begin{equation}
\begin{array}{lcl}
\tdermat\left(\trho\right) & = & 0 \\
\tdermat\left(\trho \left[\tdetjr\right]^2 \tVs\right) & = & -\blue{\frac{\left[P\right]}{\rho_0 V_{s,0}^2}}\tparts \; \tiP_2  - \blue{\frac{gT}{V_{s,0}}}\sina \trho  + \tdetjr\tparts\left(\tdetjr\right)\trho \tVs^2  \\
& & + \blue{\frac{\nu T}{\rho_0 R_0^2}}\Bigg\{\blue{\left(\frac{R_0}{\lambda}\right)^2} \frac{1}{\tdetj}\tparts\left(\frac{\tir^2}{\tdetj}\tparts\left(\left[\tdetjr\right]^2\tVs\right)\right) \\
&& + \frac{1}{\tdetj}\tpartr\left(\tdetj\tpartr\left(\left[\tdetjr\right]^2\tVs\right)\right) + \frac{1}{\tdetj}\tparth\left(\frac{\tdetj}{\tir^2}\tparth\left(\left[\tdetjr\right]^2\tVs\right)\right)  \\
& & + 2\frac{\tir}{\tdetj}\tpartr\left(\frac{\tdetj}{\tir}\right)\tparts\left(\tVr\right) + 2\frac{\tir}{\tdetj}\tparth\left(\frac{\tdetj}{\tir}\right)\tparts\left(\tVt\right) \\
& & +\blue{\left(\frac{R_0}{\lambda}\right)^2} \tgradc\left(\frac{\tir}{\tdetj}\tparts\left(\frac{\tdetj}{\tir}\right)\right)\cdot \tvector \\
& &  -   \blue{\left(\frac{R_0}{\lambda}\right)^2} \frac{1}{\tdetj}\tparts\left(\frac{\tir^2}{\tdetj}\tparts\left(\left[\tdetjr\right]^2\right)\right)\tVs \\
&& - \frac{1}{\tdetj}\tpartr\left(\tdetj\tpartr\left(\left[\tdetjr\right]^2\right)\right)\tVs - \frac{1}{\tdetj}\tparth\left(\frac{\tdetj}{\tir^2}\tparth\left(\left[\tdetjr\right]^2\right)\right)\tVs \Bigg\}\\
\tdermat\left(\trho \tVr\right) & = & -\blue{\frac{\left[P\right]}{\rho_0 V_{r,0}^2}}\tpartr \; \tiP_2+ \blue{\left(\frac{V_{s,0}}{V_{r,0}}\right)^2} \frac{\tdetj}{\tir}\tpartr\left(\frac{\tdetj}{\tir}\right)\trho \tVs^2  + \trho \tir \tVt^2 \\
& & + \blue{\frac{\nu T}{\rho_0 R_0^2}}\Bigg\{
\blue{\left(\frac{R_0}{\lambda}\right)^2} \frac{1}{\tdetj}\tparts\left(\frac{\tir^2}{\tdetj}\tparts\left(\tVr\right)\right) + \frac{1}{\tdetj}\tpartr\left(\tdetj\tpartr\left(\tVr\right)\right) \\
& &  + \frac{1}{\tdetj}\tparth\left(\frac{\tdetj}{\tir^2}\tparth\left(\tVr\right)\right) - 2\frac{\tir}{\tdetj}\tpartr\left(\frac{\tdetj}{\tir}\right)\tparts\left(\tVs\right) - \frac{2}{\tir}\tparth\left(\tVt\right) \\
&&  - \left(\frac{\tir}{\tdetj}\right)^2\tpartr\left(\frac{\tdetj}{\tir}\right) \tgradc\left(\frac{\tdetj}{\tir}\right)\cdot \tvector \\
& & - \frac{\tir}{\tdetj}\tparts\tpartr\left(\frac{\tdetj}{\tir}\right) \tVs - \frac{1}{\tir^2}\tVr - \frac{1}{\tdetj} \tparth\left(\frac{\tdetj}{\tir}\right)\tVt\Bigg\}  \\
\tdermat\left(\trho \left(\tir\right)^2\tVt\right) & = & -\blue{\frac{\left[P\right]}{\rho_0 R_0^2 V_{\theta,0}^2}}\tparth \; \tiP_2 + \blue{\left(\frac{V_{s,0}}{R_0 V_{\theta,0}}\right)^2}\frac{\tdetj}{\tir}\tparth\left(\frac{\tdetj}{\tir}\right)\trho \tVs^2 \\
& & + \blue{\frac{\nu T}{\rho_0 R_0^2}}\Bigg\{
\blue{\left(\frac{R_0}{\lambda}\right)^2} \frac{1}{\tdetj}\tparts\left(\frac{\tir^2}{\tdetj}\tparts\left(\left(\tir\right)^2\tVt\right)\right)  + \frac{1}{\tdetj}\tpartr\left(\tdetj\tpartr\left(\left(\tir\right)^2\tVt\right)\right) \\
& & + \frac{1}{\tdetj}\tparth\left(\frac{\tdetj}{\tir^2}\tparth\left(\left(\tir\right)^2\tVt\right)\right) - \frac{2\tir}{\tdetj}\tparth\left(\frac{\tdetj}{\tir}\right)\tparts\left(\tVs\right) + \frac{2}{\tir}\tparth\left(\tVr\right) \\
&& - \frac{2}{\tdetj}\tpartr\left(\tdetj\tir\tVt\right) - \left[\frac{\tir}{\tdetj}\right]^2\tparts\left(\frac{\tdetj}{\tir}\tparth\left(\frac{\tdetj}{\tir}\right)\right)\tVs  \\
& & - \frac{\tir}{\tdetj^2}\tparth\left(\frac{\tdetj}{\tir}\right)\tVr - \left[\frac{\tir}{\tdetj}\tparth\left(\frac{\tdetj}{\tir}\right)\right]^2 \tVt \Bigg\}  \\
\tgradc \cdot \left(\tdetj\tvector\right) & = & 0. 
\end{array}
\end{equation}

We recall the assumptions \eqref{eq:Assumption1} and \eqref{eq:Assumption2}, given by
\[
\frac{[P]}{\rho_o V_{s,o}^2} = O(1), \; \frac{V_{s,o}}{R_o V_{\theta,o}} = O(1), \text{ and } \frac{gT}{V_{s,o}}= O(1),
R_o \left| \alpha'(s) \right| = O\left( \epsilon \right),
\]
and 
\[
\frac{\nu T}{\rho_o R_o^2} = O\left( \epsilon \right).
\]

There is just one leading order term in the momentum equation in the radial direction that is found as follows. The first term in the right-hand side has a factor of
\[
\frac{[P]}{\rho_o V_{r,o}^2} =  \frac{[P]}{\rho_o V_{s,o}^2}  \frac{V_{s,o}^2}{V_{r,o}^2} = O(\epsilon^{-2}). 
\]
The second term has a factor of order $O(\epsilon^{-2})$. However, we ignore that term because $$\partial_{r} (|J|/r) = -\sin(\theta) \alpha'(s) R_o = O(\epsilon).$$ The viscosity terms are all order $O( \epsilon )$ or higher, and the left-hand side is $O(1)$. Thus, taking the leading order term, we obtain
\[
\partial_{\tilde r}\left(\tilde P_2\right)  = 0,
\]
which implies that $\tilde P_2$ is independent of $\tilde r$. \\

In the equation of balance for the angular momentum, we will exclude only terms that are order $O(\epsilon)$ or higher to keep the contribution of the artery's curvature on the flow. The first term in the right-hand side has a factor
\[
\frac{[P]}{\rho_o (R_o V_{\theta,o})^2 } = \frac{[P]}{\rho_o V_{s,o}^2} \frac{V_{s,o}^2}{(R_o V_{\theta,o})^2} = O(1), 
\]
and we keep it. As discussed above, the non-dimensional parameter involving the viscosity term is order $O( \epsilon )$. For the terms inside the brackets, we assume that $rV_\theta, V_r$ and $|J|/r$ depend all weakly on $\theta$, which is consistent with the fact that the blood flow moves mainly in the axial direction. As a result, only two terms in front of the viscosity coefficient has a leading contribution, as specified below in equation \eqref{eq:ReducedEqns}.\\

Similarly, we only exclude terms in the momentum equation that are order $O(\epsilon)$ or higher. All the terms before the viscosity coefficient are order $O(1)$ or $O(\epsilon)$. Only one viscosity term inside the brackets has a leading contribution. The other terms have either a factor of order $O(\epsilon)$, or can be neglected due to the weak dependance on $\theta$.\\

The reduced system in dimensional form is given in equation \eqref{eq:ReducedEqns} in Section \ref{sec:AveragedEqns} . In that equation we redefine $p=P_2$ as the transmural pressure to avoid heavy notation. In Section \ref{sec:AveragedEqns} , those equations are then integrated in the radial direction to derive the desired model \eqref{eq:MainSystemNotConservation}.

\clearpage
\bibliographystyle{unsrt}
\bibliography{refs}

\end{document}